\newtheorem*{theorem*}{Theorem A}
\newtheorem*{theorem**}{Theorem B}
\newtheorem{theorem}{Theorem}[section]
\newtheorem{proposition}[theorem]{Proposition}
\newtheorem{lemma}[theorem]{Lemma}
\newtheorem{corollary}[theorem]{Corollary}
\theoremstyle{definition}
\newtheorem{definition}[theorem]{Definition}
\theoremstyle{remark}
\newtheorem{remark}[theorem]{Remark}
\newtheorem{problem}[theorem]{Problem}
\renewcommand{\span}{\mathop{\mathrm{span}}}
\renewcommand{\Im}{\mathop{\mathrm{Im}}}
\newcommand{\link}{\ensuremath{\mathrm{Lk}}}
\DeclareMathOperator{\Lk}{Lk}
\newcommand{\Id}{\ensuremath{\mathrm{Id}}}
\newcommand{\N}{\ensuremath{\mathbb{N}}}
\newcommand{\Z}{\ensuremath{\mathbb{Z}}}
\newcommand{\F}{\ensuremath{\mathbb{F}}}
\renewcommand{\k}{\ensuremath{\mathbf{k}}}
\newcommand{\xkn}{\ensuremath{X(\mathbf{k}^n)}}
\newcommand{\kkn}{\ensuremath{K(\mathbf{k}^n)}}
\newcommand{\sign}{\ensuremath{\mathrm{sign}}}
\newcommand{\lra}{\longrightarrow}
\title{Universal simplicial complexes inspired by toric topology}
\author{Djordje Barali\'{c}, Jelena Grbi\'{c}, Ale\v s Vavpeti\v c, Aleksandar Vu\v{c}i\'{c}}
\address{ \scriptsize{Mathematical Institute SASA, Belgrade, Serbia }}
\email{djbaralic@mi.sanu.ac.rs}
\address{\scriptsize{School of Mathematics, University of Southampton,
         Southampton, SO17 1BJ, United Kingdom}}
\email{J.Grbic@soton.ac.uk}
\address{Faculty of Mathematics and Physics, University of Ljubljana, Slovenia and Institute of Mathematics, Physics and Mechanics, Ljubljana, Slovenia}
\email{ales.vavpetic@fmf.uni-lj.si}
\address{\scriptsize{University of Belgrade, Faculty of Mathematics, Belgrade, Serbia}}
\email{avucic@matf.bg.ac.rs}
\subjclass[2010]{Primary  	05E45, 55P15, Secondary 52B22.}
\date{}
\begin{document}

\pagenumbering{arabic}

\begin{abstract}

Let $\mathbf{k}$ be the field $\mathbb{F}_p$  or the ring $\mathbb{Z}$. We study combinatorial and topological properties of the universal simplicial complexes $\xkn$ and $\kkn$ whose simplices are certain unimodular subsets of $\k^n$. 
As a main result we show that  $\xkn$, $\kkn$ and the links of their simplicies are homotopy equivalent to a wedge of spheres specifying the exact number of spheres in the corresponding wedge decompositions. This is a generalisation of Davis and Januszkiewicz's result that $K(\Z^n)$ and $K(\F_2^n)$ are $(n-2)$-connected simplicial complexes.  

We discuss applications of these universal simplicial complexes to toric topology and number theory.

\end{abstract}

\maketitle

\section{Introduction}

An abstract simplicial complex on a set $\mathcal{S}$ is a collection $K$ of subsets of $\mathcal{S}$ closed under taking subsets. We assume that $\mathcal S\subset K$. and refer to $S$ as the vertex set of $K$. The elements of $K$ are called simplices. A simplex of $K$ which is not a proper subset of some other simplex in $K$ is called a \textit{maximal simplex} or a \textit{facet}.

The dimension of a simplex $\sigma\in K$ is $\dim \sigma =| \sigma| -1$. The dimension of a simplicial complex $K$ is the maximal dimension of its simplices. 
A simplicial complex $K$ is \emph{pure} if all its maximal simplices are of the maximal dimension. 
A simplicial complex $K$ is \textit{finite} if the number of its simplices is finite, otherwise it is \textit{infinite}.

The link of  $\sigma \subset S$ in
$K$ is the subcomplex
\begin{equation*}
\mathrm{\link}_K (\sigma)  =  \{\tau \text{ }|\text{ } \sigma\cup\tau\in K,
\sigma\cap\tau=\emptyset\}. \end{equation*}
If $K$ is assumed, the link $\Lk_K(\sigma)$ will be denoted by $\Lk(\sigma )$.

\begin{definition}
Let $\mathbf{k}$ be the field $\mathbb{F}_p$
or the ring $\mathbb{Z}$. A set $\{ v_1, \ldots , v_m \}$ of elements in $\mathbf{k}^n$ is called \emph{unimodular} if $\span \{v_1,\ldots ,v_m\}$ is a direct summand of $\mathbf{k}^n$ of dimension $m$. The simplicial complex $X(\mathbf{k}^n)$ on unimodular vertices $v_i\in \k^n$ consists of all unimodular subsets of $\k^n$.  Note that a subset of a unimodular set is itself unimodular.
\end{definition}

Initially the simplicial complex $\xkn$ appeared independently in algebraic $K$-theory in the work of van der Kallen~\cite{MR586429} and in combinatorics in the work of Stanley~\cite{MR593545}.
Closely related to the simplicial complex $\xkn$ is the Tits building $T(V)$ for an $n$-dimensional vector space $V$. An $m$-simplex of $T(V)$ is a chain $W_0<\ldots<W_m$ of proper subspaces. Studying finite generation of the higher algebraic $K$-groups, Quillen~\cite{MR2655185} showed that $T(V)$ has the homotopy type of a wedge of $(n-2)$ spheres. This result further prompted the study of the interesting homology group $H_{n-2}(T(V))$. This is a $GL(V)$-module called the Steinberg module. Further on, van der Kallen~\cite{MR586429} proved that the link of an $m$-simplex of $T(V)$ is $(n-m-3)$-connected.

Another simplicial complex closely related to $ X(\mathbf{k}^n)$ is the \textit{simplicial complex} $\kkn$.

Define a line through the origin to be a 1-dimensional unimodular subspace of $\k^n$, that is, a submodule of rank $1$ which is a direct summand of $\k^n$ and denote by $P\k^n$ the set of all such lines in $\k^n$.
\begin{definition}
The simplicial complex $\kkn$ on the vertex set  $P\k^n$ is defined such that its $m$-simplex
 is a set of lines $\{l_1, \dots, l_{m+1}\}$, $l_i\in P\k^n$ which span an $(m+1)$-dimensional unimodular subspace of $\mathbf{k}^n$.
\end{definition}

The simplicial complexes $\xkn$ and $\kkn$ are related through the map which sends a nonzero
unimodular element of $\k^n$ into a line generated by it. The simplicial complex $\kkn$ is a retract of the simplicial complex $\xkn$. 



The simplicial complexes $K(\F_2^n)$ and $K(\mathbb{Z}^n)$ are important objects in toric topology since they are universal simplicial complexes classifying small covers for $\k=\F_2$ and quasitoric manifolds for $\k=\Z$, see~\cite{DJ}.

Quasitoric manifolds can be equipped with an additional structure called omniorientation, see \cite{MR1897064}. As an analogue of the universal property of $K(\Z^n)$ and $K(\F_2^n)$, we show, as Proposition~\ref{uni}, that the complex $X(\Z^n)$ is a universal complex classifying omnioriented quasitoric manifolds $M^{2n}(P,\Lambda)$ with positive orientation on the polytope~$P$.

The goal of this paper is a combinatorial and topological study of the simplicial complexes $\xkn$ and $\kkn$ and the links of their simplices and their applications to related mathematical areas. After discussing their universal properties in Section~\ref{universal}, we focus on determining the $f$-vectors of $X (\mathbb{F}_p^n)$, $K(\mathbb{F}_p^n)$ and the $f$-vectors of the links of their simplices in~\Cref{month}, which will be used to identify the homotopy type of $X(\F_p^n), K(\F_p^n)$ and of the links of their simplicies as a wedge of spheres. As the universal complexes $X(\F_p^n)$ and $K(\F_p^n)$ are finite matroids, they are shellable complexes and thus by~\cite[Theorem~1.3]{MR0744856} they are homotopy equivalent to a wedge of $(n-1)$-spheres. We improve on that result by specifying exactly how many $(n-1)$-dimensional spheres are in the wedge in \Cref{mainKp}.

Davis and Januszkiewicz~\cite{DJ} proved that both complexes $K(\F_2^n)$ and $K(\mathbb{Z}^n)$ are $(n-2)$-connected and that the link of an $m$-simplex is $(n-m-3)$-connected.
However, there is a gap in their proof of the connectivity of $K(\mathbb{Z}^n)$ as they used van der Kallen result~\cite[Theorem 2.6]{MR586429} which guarantees the connectivity of $K(\mathbb{Z}^n)$ and its links only up to dimension $n-3$ not as claimed in \cite[Theorem~2.2]{DJ} up to dimension $n-2$. We obtain a stronger result, as~\Cref{main:z}, showing that $K(\mathbb{Z}^n)$ and its links of their simplices are homotopy equivalent to a wedge of spheres.

 Our main result is that the universal complexes  $X(\Z^n)$ and $K(\Z^n)$  have homotopy type of a wedge of countably infinitely many spheres.
 
 \begin{theorem}
\label{main:z}

\begin{itemize}

\item [$(a)$] For $n\geq 2$, the simplicial complex $X(\mathbb{Z}^n)$ is homotopy equivalent to a countably infinite wedge of $(n-1)$-spheres.

\item [(b)] For $n\geq 2$ and $m \leq n-2$, the link of an $m$-simplex in $X (\mathbb{Z}^n)$ is homotopy equivalent to a countably infinite wedge of $(n-m-2)$-spheres.

\item [(c)] For $n\geq 2$, the simplicial complex $K(\mathbb{Z}^n)$ is homotopy equivalent to a countably infinite wedge of $(n-1)$-spheres.

\item [(d)] For $n\geq 2$ and or $m\leq n-2$, the link of an $m$-simplex in $K
(\mathbb{Z}^n)$ is homotopy equivalent to a countably infinite wedge of $(n-m-2)$-spheres.

\end{itemize}
\end{theorem}

In Section 4 we give the proof of~\Cref{main:z} which together with~\Cref{mainKp} and ~\Cref{main2}, generalise the results of Davis and Januszkiewicz, and van der Kallen.

For an infinitely generated algebra there is no algebraic notion of Cohen-Macaulayness. For a finite simplicial complex $K$, by the Reisner criterion, Cohen-Macaulayness of $\k[K]$  is equivalent to the reduced homology groups of the link of all simplices $\sigma$ in $K$ being trivial except in top degree. This combinatorial criterion lends itself to a generalisation of Cohen-Macaulyness to infinite simplicial complexes. Using this as the definition of Cohen-Macauly for finite dimensional infinite simplicial complexes,~\Cref{main:z} implies the following statement.

\begin{corollary}\label{corch}
The universal complexes  $X(\Z^n)$ and $K(\Z^n)$ are Cohen-Macaulay.
\end{corollary}


An important application of simplicial complexes $K(\F_2^n)$ and $K(\Z^n)$ is to the study of the Buchstaber invariant of simplicial complexes. The known upper and lower bound estimates of the Buchstaber invariants using topological and combinatorial properties of these complexes are obtained in~\cite{Ayze} and~\cite{MR3482595}.

In Section~\ref{sec:invariant}, we introduce the Buchstaber invariant of a simplicial complex $K$ over $\F_p$, where $p$ is a prime and relate it to the Buchstaber invariant of  the universal complexes $\xkn$ and $\kkn$. In doing so we consider nondegenerate simplicial maps into those universal complexes, their numerical invariants and the {$f\text{-vectors}$} of $K(\F_p^n)$. As a result we obtain an analogue of Ayzenberg's formula~\cite{Ayze1} for a graph.

Beside toric topology, we relate the universal simplicial complexes $X(\F_p^n)$ and its $f$-vectors to number theoretical problem about divisibility.
As an application, we answer Bahargava's question~\cite{Bhar} on combinatorial interpretations of certain generalised factorial function.

\section{A universal property of the simplicial complex $X(\Z^n)$}
\label{universal}

Our study of the simplical complexes $\xkn$ and $\kkn$ has been motivated by an important role $K(\Z^n)$ and $K(\F_2^n)$ play in toric topology.

Let $G_d$ be $\Z_2$ when $d=1$ and $S^1$ when $d=2$. Spaces with $G_d^n$-actions  are objects studied in toric topology. A special attention is given to $dn$-manifolds with a locally standard $G_d^n$-action having an $n$-dimensional simple polytope $P^n$ as the orbit space. Those manifolds are usually referred to as small covers for $d=1$ and quasitoric manifolds for $d=2$. They are topological generalisations of toric and real toric varieties. 
Recall that  by \cite[Construction~5.12]{MR1897064} the quasitoric manifolds and small covers are determined by a simple polytope $P^n$ and a characteristic map $\lambda\colon \mathcal{F}\rightarrow \pi_{d-1}(G_d^n)$, where $\mathcal{F}$ is the set of facets of $P$. In the case of quasitoric manifolds for each facet $F$ of $P^n$  the vector $\lambda (F)$ is determined up to sign. The characteristic map $\lambda$ satisfies a nondegeneracy condition, that is, $\{\lambda (F_{i_1}), \dots, \lambda (F_{i_k})\}$ is a unimodular set whenever the intersection of facets $F_{i_1}$, $\dots$, $F_{i_k}$ is nonempty. Let $\partial P^*$ be a simplicial complex which is the boundary of the dual to $P$. Then there is a simplicial map $\bar{\lambda}\colon \partial P^*\rightarrow K (\pi_{d-1}(G^n_d))$  defined on the vertices of $\partial P^*$ by $\lambda$ such that a vertex $v$ of $\partial P^*$ dual to the facet $F_v$ of $P$ is sent to the line $l (\lambda (F_v))$. The map $\lambda$ is a characteristic map if and only if $\bar{\lambda}$ is a nondegenerate simplicial map. Therefore, the set of equivalences classes of $G_d^n$-manifolds over $P$ is in bijection with the set of nondegenarate simplicial maps $f\colon \partial P^*\rightarrow K(\pi_{d-1}(G^n_d))$ modulo natural action of $\mathrm{Aut} (\pi_{d-1}(G_d^n))$. This universal property of $K(\pi_{d-1}(G^n_d))$ further motivates the combinatorial and topological study of $K(\pi_{d-1}(G^n_d))$.

Using omniorientation, Buchstaber, Panov and Ray~\cite{MR2337880} constructed a quasitoric representative for every complex cobordism class $\Omega^U$.  A choice of omniorientation on a given quasitoric manifold $M^{2n}$ over $P^n$ is equivalent to a choice of orientation on $P^n$ together with a choice of facet vectors $\lambda (F_{i})$. 
More precisely, recall that a pair $(P,\Lambda )$ is called a combinatorial quasitoric pair if $P$ is an oriented combinatorial simple $n$-polytope with $m$ facets and $\Lambda$ is an $n\times m $ integer matrix with the property that for each vertex $v$ appropriate minor $\Lambda_v$ of $\Lambda$, consisting of columns whose facets contain $v$, is invertible. Two combinatorial quasitoric pairs $(P,\Lambda )$ and $(P',\Lambda ')$ are equivalent if $P=P'$ with the same orientation and $ \Lambda = \Phi \Lambda '$ for some $n \times n$ integer matrix $\Phi $ with determinant 1.

Buchstaber, Panov and Ray~\cite{MR2337880} showed that there is a 1-1 correspondence between equivalence classes of omnioriented quasitoric manifolds and combinatorial quasitoric pairs.  
We use this combinatorial description of omnioriented quasitoric manifolds to describe a universal property of $X(\Z^n)$. 
\begin{proposition}
\label{uni}
There is a 1-1 correspondence between equivalence classes of omnioriented quasitoric manifolds with positive orientation on $P$ and nondegenerate simplicial maps 
\[
f\colon\partial P^* \lra X(\Z^n)
\]
 modulo natural action of $\mathrm{Aut} (\mathbb{Z}^n)$.
\end{proposition}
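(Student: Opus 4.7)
\medskip

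\noindent\textbf{Proof plan for Proposition~\ref{uni}.}
The plan is to identify nondegenerate simplicial maps $P^*\to X(\Z^n)$ with combinatorial quasitoric pairs $(P,\Lambda)$ in which the orientation of $P$ is the prescribed positive one, and then invoke the Buchstaber--Panov--Ray correspondence between such pairs and omnioriented quasitoric manifolds.

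First, I would unpack the vertex set of $X(\Z^n)$. By definition a $0$-simplex is a singleton $\{v\}\subset\Z^n$ such that $\Z v$ is a rank--$1$ direct summand, which is equivalent to saying $v$ is a primitive integer vector. Thus a nondegenerate simplicial map $f\colon P^*\to X(\Z^n)$ is the same as a choice, for every facet $F_i$ of $P$, of a primitive vector $\lambda_i:=f(F_i^*)\in \Z^n$, subject to the constraint that whenever $F_{i_1}\cap\cdots\cap F_{i_n}$ is a vertex of $P$, the set $\{\lambda_{i_1},\ldots,\lambda_{i_n}\}$ is unimodular, i.e.\ the corresponding $n\times n$ minor of the matrix $\Lambda=(\lambda_1\,|\,\cdots\,|\,\lambda_m)$ has determinant $\pm 1$. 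This is exactly the data of a characteristic matrix in the sense of the combinatorial quasitoric pair $(P,\Lambda)$ from the paragraph preceding the proposition, with the orientation of $P$ fixed to be positive.

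Next, I would check that the equivalence relations match. Two simplicial maps $f,f'\colon P^*\to X(\Z^n)$ lie in the same $\mathrm{Aut}(\Z^n)$-orbit exactly when there exists $\Phi\in GL_n(\Z)$ with $f'=\Phi\circ f$, that is, $\lambda'_i=\Phi\lambda_i$ for all $i$, which in matrix form reads $\Lambda'=\Phi\Lambda$; this is precisely the equivalence of combinatorial quasitoric pairs recalled above (up to the orientation subtlety discussed below). Conversely, every combinatorial pair $(P,\Lambda)$ with $P$ positively oriented defines a map $f$ by $f(F_i^*)=\lambda_i$, and nondegeneracy is immediate from the invertibility of the vertex minors~$\Lambda_v$. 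Thus the two sets modulo their respective equivalences are in bijection, and composing with the Buchstaber--Panov--Ray correspondence yields the proposition.

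The one step that requires care, and which I expect to be the main obstacle, is orientation bookkeeping: the combinatorial-pair equivalence in the cited definition uses $\Phi\in SL_n(\Z)$, whereas $\mathrm{Aut}(\Z^n)=GL_n(\Z)$. The resolution is that a determinant $-1$ transformation $\Phi$ simultaneously reverses the orientation of $\Z^n$ and of the resulting quasitoric manifold $M^{2n}$; since we have fixed a positive orientation on $P$, the sign $\det\Phi$ is absorbed into the omniorientation of $M$, so quotienting by full $GL_n(\Z)$ on the simplicial-map side matches quotienting by $SL_n(\Z)$ on the combinatorial-pair side together with the $\pm 1$ ambiguity of overall orientation. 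Verifying this compatibility carefully by tracking how $\det\Phi$ acts on the orientation of $M$ via the Davis--Januszkiewicz construction is the only nontrivial point; everything else is a direct translation of definitions.
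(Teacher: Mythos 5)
Your proof takes essentially the same approach as the paper's: translate a nondegenerate simplicial map $f\colon P^*\to X(\Z^n)$ into a matrix $\Lambda$ whose columns are the primitive vectors $f(F_i^*)$, observe that the unimodularity condition for $f$ matches the invertibility of the vertex minors $\Lambda_v$, and invoke the Buchstaber--Panov--Ray correspondence between combinatorial quasitoric pairs and omnioriented quasitoric manifolds. The paper's proof consists of exactly your first two paragraphs, stated tersely, and goes no further.

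Your third paragraph raises a genuine subtlety that the paper's proof silently skips: the paper's stated equivalence of combinatorial quasitoric pairs is $\Lambda = \Phi\Lambda'$ with $\det\Phi = 1$, whereas the proposition quotients by the natural action of $\mathrm{Aut}(\Z^n) = GL_n(\Z)$, which also contains determinant $-1$ transformations. The paper does not reconcile these; it only exhibits the bijection between individual maps and matrices, and never checks that the two quotients agree. You are right that this requires an argument, and the sketch you give --- that $\det\Phi = -1$ reverses the orientation of $T^n$ and hence of $M^{2n}$, and that this is compatible with holding the orientation of $P$ fixed --- is the right direction. To make it watertight you would need to track precisely how $\Phi$ with $\det\Phi = -1$ transforms the omniorientation data (both the orientation of $M$ and the unambiguous facet-vector choice), and confirm that the resulting pair of omnioriented manifolds are to be regarded as the same under the equivalence being classified. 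As written, the paper's own proof leaves this to the reader; your attempt identifies and partially addresses the gap, which is more careful than the source.
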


\begin{proof}
For a combinatorial quasitoric pair $(P,\Lambda )$, where $P$ is an oriented combinatorial simple $n$-polytope with $m$ facets and $\Lambda$ is an $n\times m $ integer matrix with the property that for each vertex $v$ appropriate minor $\Lambda_v$ of $\Lambda$ is invertible, define a map $f\colon\partial P^*\lra X(\Z^n)$ on vertices $v_i$ of $\partial P^*$ to be equal to the $i${th} column of $\Lambda$. The condition that $\Lambda_v$ is invertible for each vertex $v$ of $P$ is equivalent to $f$ being nondegenerate.

On the other hand, for a nondegenerate simplicial map $f\colon \partial P^*\lra X(\Z^n)$ we construct $\Lambda$ by placing vectors $f(v_i)$ as appropriate columns and thus obtain a combinatorial quasitoric pair $(P,\Lambda)$.
\end{proof}

As a side remark, to avoid possible confusion, let us point out that Davis and Januszkiewicz in~\cite{DJ} denoted by $X(\mathbf{k}^n)$ the barycentric subdivision of simplicial complex of all unimodular subsets of $\k^n$.

\section{The homotopy type of $K(\F_p^n)$, $X(\F_p^n)$ and their links}

Matroids are combinatorial objects which generalise the notion of linear independence in vector spaces.
A finite simplicial complex $M$ is called \emph{matroid} if for any two simplices $\sigma$, $\tau\in M$ such that $|\sigma|>|\tau|$, then for some $v\in \sigma\setminus \tau$ the set $\tau\cup \{v\}\in M$. Simplices of a matroid are called the independents sets and the defining property is called the independent set exchange property. 

\begin{lemma}
The simplicial complexes $X(\F_p^n)$ and $K(\F_p^n)$ are finite matroids.
\end{lemma}
\begin{proof}Let $\sigma=\{v_{i_1}, \dots, v_{i_l}\}$ and  $\tau=\{v_{j_1}, \dots, v_{j_m}\}$ be two simplices in $X(\F_p^n)$  such that $l>m$. As the dimension of a simplex in $X(\F_p^n)$ is one less than the dimension of the span of its vertices, there exists $v\in \sigma$ not lying in the span of the vertices in $\tau$. By the definition of $X(\F_p^n)$, for such  $v$, $\tau \cup \{v\}\in X(\F_p^n)$. Analogously, it follows readily that $K(\F_p^n)$  also satisfies the independent set exchange property and therefore $K(\F_p^n)$ is a matroid.
\end{proof}

A simplicial complex is \textit{shellable} if its facets can be arranged in linear order $F_1,F_2,\ldots,F_q,\ldots$ in such a way that $(\cup_{i=1}^{k-1}F_i)\cap F_k$ is pure and $(\dim F_k-1)$-dimensional for all $k = 2,\ldots,q,\ldots$. Such an ordering of facets is called a \textit{shelling order} or \textit{shelling}. 
For a shelling on a finite dimensional pure simplicial complex $K$ of at most countable cardinality, we define the \textit{restriction} of a facet $F_k$ by 
\[
r(F_k)=\{v\in F_k\left|\right. F_k\setminus \{v\} \subset (\cup_{i=1}^{k-1}\overline{F}_i)\},
\] 
where for $\sigma\in K$, by $\overline{\sigma}$ we assume the complex $\{\tau |\tau\subseteq \sigma\}$. A simplicial complex $K$ is \textit{shellable of characteristic} $h$ if for some shelling and corresponding restriction
\[
h=\left| \{F\in K\left|\right. F \text{\,is a facet} \text{\,such that\,}  r(F)=F\}\right|.
\]
Bj\" orner proved~\cite[Theorem~1.3]{MR0744856} that a pure $n$-dimensional shellable complex $K$ of at most countable cardinality has the homotopy type of a wedge of $h$ spheres $S^n$, where $h$ is the characteristic of shelling. Finite matroids are shellable complexes and have the homotopy type of a wedge of spheres, see~\cite{MR1165544} and~\cite[Theorem~1.3]{MR0744856}. For more information on matroid theory, we refer readers to~\cite{MR1207587} and~\cite{MR1226888}. Since the simplicial complexes $X(\F_p^n)$ and $K(\F_p^n)$ are natural examples of finite matroids they have the homotopy type of a wedge of spheres $S^{n-1}$. To determined the exact number of spheres in the wedge we calculate the $f$-vectors of  $X(\F_p^n)$ and $K(\F_p^n)$.

\subsection{$f$-vectors of $X(\F_p^n)$ and $K(\F_p^n)$}
The \emph{$f$-vector} of an $(n-1)$-dimensional simplicial complex
$K$ is the integer vector 
$$
\textbf{f}
(K)=\left(f_{-1}, f_0, f_1, \dots, f_{n-1}\right)
$$ 
where
$f_{-1}=1$ and $f_i=f_i (K)$ denotes the number of $i$-faces
of $K$ for all $i\geq0$.
The \textit{$f$-polynomial} of  an $(n-1)$-dimensional
simplicial complex $K$ is 
\[
\textbf{f} (t)=t^n+f_0
t^{n-1}+\dots+f_{n-1}.
\]
The \textit{Euler characteristic} of a simplicial complex $K$ is given by
\begin{equation}
\label{euler} \chi (K)=f_0-f_1+\cdots+(-1)^{n-1}
f_{n-1}=(-1)^{n-1} \textbf{f} (-1)+1.
\end{equation}

The $f$-vector of $X(\F_p^n)$ and $K(\F_p^n)$ and of the
links of their simplices can be explicitly described in terms of
$p$ and $n$. The $f$-vectors carry important combinatorial
information of $X(\F_p^n)$ and $K(\F_p^n)$ and knowing them helps 
to determine not only the homotopy type of
the universal complexes $X(\F_p^n)$ and $K(\F_p^n)$ but also the Buchstaber invariant of simplicial complexes.

Since the simplicial complexes $X(\F_p^n)$ and $K(\F_p^n)$ are
given as the sets of unimodular subsets of $\mathbb F^n_p$, they are
clearly  pure, $(n-1)$-dimensional and come with a transitive and simplicial action of $GL(n, \mathbb F_p)$. Therefore to find the $f$-vector of
$X(\F_p^n)$ and $K(\F_p^n)$ and of the links of their simplices it is enough to compute the number of linearly independent
$i$-tuples in $\mathbb F^n_p$, which is a standard exercise in linear algebra. For example, in the case of $X(\F_p^n)$, the calculation reduces to solving the recurrence relation $(i+2)f_{i+1}(X(\F_p^n))=(p^n-p^{i+1})f_i(X(\F_p^n))$.  

\begin{lemma}
\label{month}

\begin{enumerate}[label={(\alph*)},ref={\thelemma~(\alph*)}]
\item  \label{month1} The $f$-vector of $X (\mathbb{F}_p^n)$ is
given by
\begin{equation*}
 f_{i} (X(\mathbb{F}_p^n))= \frac{(p^n-p^{i}) \cdots (p^n-p^0)}{(i+1)!}
\end{equation*}
for $0\leq i\leq n-1$ and $f_{-1}=1$.

\item  \label{month2}The $f$-vector of $K (\mathbb{F}_p^n)$ is
given by
\begin{equation*}
 f_{i} (K (\mathbb{F}_p^n))= \frac{(p^n-p^{i}) \cdots (p^n-p^0)}{(p-1)^{i+1} (i+1)!}
\end{equation*}
for $0\leq i\leq n-1$ and $f_{-1}=1$.

\item  \label{month3}Let $\sigma $ be an $m$-simplex in
$X(\mathbb{F}_p^n)$. The $f$-vector of
$\Lk_{X(\mathbb{F}_p^n)}(\sigma)$ is given by
\begin{equation*}
f_i(\Lk_{X (\mathbb{F}_p^n)}(\sigma)) =
\frac{(p^n-p^{m+1})\cdots (p^n-p^{m+i+1})}{(i+1)!}
\end{equation*}
for $0\leq i\leq n-m-2$ and $f_{-1}=1$.

\item  \label{month4} Let $\sigma $ be an $m$-simplex in  $K
(\mathbb{F}_p^n)$. The $f$-vector of $\Lk_{K
(\mathbb{F}_p^n)}(\sigma)$ is given by
\begin{equation*}
f_i(\Lk_{K (\mathbb{F}_p^n)}(\sigma)) =
\frac{(p^n-p^{m+1})\cdots (p^n-p^{m+i+1})}{(p-1)^{i+1}(i+1)!}
\end{equation*}
for $0\leq i\leq n-m-2$ and $f_{-1}=1$.
\end{enumerate}
\qed
\end{lemma}

\subsection{The homotopy type of $X(\mathbb F^n_p)$ and $K(\mathbb F^n_p)$} Being finite matroids, the simplicial complexes $X(\mathbb{F}^n_p)$ and $K(\mathbb{F}^n_p)$ have the homotopy type of wedge of spheres. We use~\Cref{month} to calculate the exact number of spheres in those wedges.
\begin{theorem}
\label{mainKp}
\begin{enumerate}
\item [(a)]
Let $n\geq 1$ be an integer. The simplicial complex $X (\mathbb{F}_p^n)$ is homotopy equivalent to the wedge of $A_n(p)$ spheres
$S^{n-1}$, where
\[
A_n(p) =(-1)^n + \sum_{i=0}^{n-1}(-1)^{n-1-i}\frac{(p^n-p^{i})\cdots (p^n - p^0)}{(i+1)!}.
\]

\item [(b)]  Let $n > 1$ be an integer. The simplicial complex $K(\mathbb{F}^n_p)$ is homotopy equivalent to the wedge of $B_n(p)$ spheres $S^{n-1}$,
where
\[
B_n(p) = (-1)^n+\sum_{i=0}^{n-1} (-1)^{n-1-i}\frac{(p^n-p^{i}) \cdots (p^n-p^0)}{(p-1)^{i+1} \cdot (i+1)!}.
\]
\end{enumerate}
\end{theorem}

\begin{proof} 
Since $X(\mathbb{F}^n_p)$ and $K(\mathbb{F}^n_p)$  are finite matroids, they are homotopy equivalent to a wedge of $(n-1)$-spheres. We determine the number of spheres in the wedge using the Euler characteristic~\eqref{euler}, 
\[
\chi(K (\mathbb{F}_p^n))= 1+(-1)^{n-1}B_n(p) = f_0(K (\mathbb{F}_p^n))- \ldots +(-1)^{n-1}f_{n-1}(K (\mathbb{F}_p^n))
\]
which implies that
\begin{equation}
\label{euler2}
B_n(p) = (-1)^n+\sum_{i=0}^{n-1}(-1)^{n-1-i}f_i(K (\mathbb{F}_p^n)).
\end{equation}
Finally, using Lemma~\ref{month2} we obtain
\[
B_n(p) = (-1)^n+\sum_{i=0}^{n-1} (-1)^{n-1-i}\frac{(p^n-p^{i}) \cdots (p^n-p^0)}{(p-1)^{i+1} \cdot (i+1)!}.
\]
The value of $A_n (p)$ follows analogously  from $\chi(X (\mathbb{F}_p^n))= 1+(-1)^{n-1}A_n(p)$.
\end{proof}

\subsection{The homotopy type of $\link_{X(\mathbb F^n_p)}(\sigma)$ and $\link_{K(\mathbb F^n_p)}(\sigma)$}

The links $\link_{X(\mathbb F^n_p)}(\sigma)$ and $\link_{K(\mathbb F^n_p)}(\sigma)$ also have interesting topological and combinatorial properties. Since the group $GL(n,\mathbb{F}_p)$ acts transitively on $X (\mathbb{F}_p^n)$, for integers $n>2$ and $0\leq m \leq n-2$ and $m$-simplices  $\sigma, \tau\in X (\mathbb{F}_p^n)$ the links $\link_{X(\mathbb F^n_p)}(\sigma)$ and $\link_{X(\mathbb F^n_p)}(\tau)$ are isomorphic. The same is true for the links of simplices in $K (\mathbb{F}_p^n)$. 

We prove the following property.
 
 \begin{lemma} 
 \label{link:sh} 
Let $K$ be a countable shellable simplicial complex. Then the link of a simplex in $K$ is shellable.
 \end{lemma}
 
 \begin{proof} For finite complexes this is a classical result 
 (see~\cite{MR1333388} and ~\cite{MR1401765}) which is proved by considering the induced order of facets in the shelling of the simplicial complex.
 
To prove the statement for countably infinite simplicial complexes, we follow the same idea. Let $F_1$, $F_2$, $\dots$, $F_k$, $\dots$ be the ordering of facets in the shelling of an $n$-dimensional simplicial complex $K$ and let $\sigma\in K$. Since for every $k$ the complex $(\cup_{i=1}^{k-1}\overline{F}_i)\cap \overline{F}_k$ is pure and $(n-1)$-dimensional, it is the union of some facets of $F_k$.  Each such facet is the intersection of some $F_i$ with~$F_k$. 
 
 Let $F_{i_1}$, $F_{i_2}$, $\dots$, $F_{i_k}$, $\dots$ be facets in the shelling containing all vertices of $\sigma$ and ordered respecting the shelling order. We prove that $F_{i_1}\setminus \sigma$, $F_{i_2}\setminus\sigma$, $\dots$, $F_{i_k}\setminus\sigma$ is a shelling of $\Lk_K (\sigma)$. Every facet of $\Lk_K (\sigma)$ is of type $F_{i}\setminus \sigma$. Consider $(\cup_{j=1}^{k-1}\overline{F}_{i_j})\cap \overline{F}_{i_k}$ which is the union of some facets of $F_{i_k}$. Let $F_{i_{j_1}}$, $\dots$, $F_{i_{j_s}}$,  $i_{j_1}$, $\dots$, $i_{j_s}< i_k$ be facets that contain all vertices of $\sigma$ such that $F_{i_{j_l}}\cap F_{i_k}$ is a facet of $F_{i_k}$ for $1\leq l\leq s$. Thus,   $(\cup_{j=1}^{k-1}\overline{F}_{i_j})\cap \overline{F}_{i_k}=\cup_{l=1}^{s}(\overline{F}_{i_{j_l}}\cup F_{i_k})$ and therefore $(\cup_{j=1}^{i_{k}-1}\overline{F}_{i_j}\setminus\sigma)\cap \overline{F}_{i_k}\setminus\sigma$ is a pure simplicial complex. This proves the statement.
 \end{proof}

The main result regarding the links of simplices in $X (\mathbb{F}_p^n)$ and $K(\F_p^n)$ is the following.

\begin{theorem}
\label{main2}
\begin{itemize}
\item [(a)]
Let $n,m$ be integers such that $n>1$ and $0\leq m \leq n-2$. The link of an $m$-simplex in $X (\mathbb{F}_p^n)$ is homotopy equivalent to the wedge of $A_{m,n}(p)$ spheres $S^{n-m-2}$, where
\[
A_{m,n}(p)=(-1)^{n-m-1} + \sum_{i=0}^{n-m-2}(-1)^{n-m-i}\frac{(p^n-p^{m+1})\cdots (p^n-p^{m+i+1})}{(i+1)!}.
\]

\item[(b)] Let $n$ and $m$ be integers such that $n>1$ and $0\leq m \leq n-2$. The link of an $m$-simplex $\sigma$ in $K(\mathbb{F}_p^n)$ is homotopy equivalent to the wedge of $B_{m,n}(p)$ spheres $S^{n-m-2}$, where
\[
B_{m,n}(p)=(-1)^{n-m-1} + \sum_{i=0}^{n-m-2}(-1)^{n-m-i}\frac{(p^n-p^{m+1})\cdots (p^n-p^{m+i+1})}{(p-1)^{i+1}(i+1)!}.
\]
\end{itemize}
\end{theorem}

\begin{proof}
By Lemma~\ref{link:sh}, the links $\link_{X(\mathbb F^n_p)}(\sigma)$ and $\link_{K(\mathbb F^n_p)}(\sigma)$ are shellable complexes and therefore have the homotopy type of a wedge of spheres.

Similarly as in Theorem~\ref{mainKp}, using equality~\eqref{euler} and Lemma~\ref{month4} we calculate the number $B_{m,n}(p)$ of spheres in the wedge for $\link_{K(\mathbb F^n_p)}(\sigma)$.

Analogously, the number $A_{m,n}(p)$ of $(n-m-2)$-spheres in the wedge for simplicial complex $\link_{X(\mathbb F^n_p)}(\sigma)$ can be calculated using the Euler characteristic~\eqref{euler} and Lemma~\ref{month3}.
\end{proof}

In commutative algebra Cohen-Macaulay rings and modules have been one of the most studied objects in the last three decades.
A \textit{Cohen-Macaulay ring} is a Noetherian commutative ring $R$ with unit in which any proper ideal $I$ of height $n$ contains a sequence $x_1,\ldots, x_n$ of elements, called a regular sequence, such that for all $i=1,\ldots, n$, the residue class of $x_i$ in the quotient ring $R/\langle x_1,...,x_{i-1}\rangle$ is a non-zero divisor.

Given an abstract simplicial complex $K$ on the vertex set $\{x_1,\ldots,x_m\}$ and a commutative ring~$\k$ with unit, the corresponding \textit{Stanley-Reisner ring} or the face ring of $K$, denoted by $\k[K]$, is the polynomial ring $\k[x_1,\ldots,x_n]$ quotiened by the ideal $I_K$ generated by the square-free monomials corresponding to the non-simplices of $K$,
\[
I_K=\langle x_{i_1}\ldots x_{i_l} |\ \{x_{i_1},\ldots, x_{i_l}\}\notin K\rangle.
\]
A simplicial complex is said to be a \textit{Cohen-Macaulay complex} if its Stanley-Reisner ring is a Cohen-Macaulay ring.

\begin{corollary}
\label{CM}
The universal complexes  $X(\F_p^n)$ and $K(\F_p^n)$ are Cohen-Macaulay.
\end{corollary}
\begin{proof}
By the Reisner criterion~\cite{MR0407036}, a finite simplicial complex $K$ is Cohen-Macaulay over $\k$ if and only if for all simplices $\sigma\in K$, all reduced simplicial homology groups of the link of $\sigma$ in $K$ with coefficients in $\k$ are zero except the top dimensional one. By Theorem~\ref{main2}, the links of an $m$-simplex in $X(\F_p^n)$ and $K(\F_p^n)$ are $(n-m-3)$-connected and $(n-m-2)$-dimensional $CW$-complexes, so the statement follows.
\end{proof}

\section{Proof of Theorem \ref{main:z}}

\subsection{The homotopy type of $X(\Z^n)$}
 We start by listing basic notation needed to prove~\Cref{main:z}.

 Let $e_i$ be the $i$th standard unit vector in $\Z^n$. For every vector $u$, denote by $u(i)$ the $i$th coordinate of $u$.
 
To make technical arguments easier and more straightforward to follow, we assume a lexicographical order of vectors in $\Z^n$, where coordinates are ordered such that $0\prec-1\prec1\prec-2\prec2\prec-3\prec\ldots$.

Recall that by definition $X(\Z^n)$ is a simplicial complex in which a subset $\sigma=\{u_0,u_1,\ldots, u_k\}\subset \Z^n$ is a $k$-simplex  whenever it can be extended to a basis in $\Z^n$. We denote that simplex by $\sigma=u_0u_1\ldots u_k$.
 To vectors $u_0,\ldots, u_{k}$ in $\Z^n$ we associate the $n\times (k+1)$ matrix $U=\Big( u_0,\ldots ,u_k\Big)$ whose columns contain given vectors $u_i, 0\leq i\leq k$. The following lemma gives a criterion for the set $\{u_0,\ldots, u_k\}$ to be a $k$-simplex in $X(\Z^n)$. 
\begin{lemma}
\label{GCD}
A $\sigma=u_0\ldots u_k$ is a $k$-simplex in $X(\Z^n)$ if and only if the greatest common divisor of all $(k+1)\times (k+1)$ minors of the matrix $U$ is equal to one.
\end{lemma}
\begin{proof} Since the vectors $u_0, u_1, \ldots, u_{k}$ belong to a basis of $\Z^n$, there exist vectors $v_{k+1}, \dots, v_{n-1}$ of $\Z^n$ such that $\det \Big(u_0, \ldots, u_k, v_{k+1}, \dots, v_{n-1}\Big)=\pm 1$. For $J=\{j_0, j_1, \dots, j_k\}\subset\{0, 1, \dots, n-1\}$, let $U_J$  denote the $(k+1)\times (k+1)$ minor formed out of  $j_0$-th, $j_1$-st, $\dots$, $j_{k}$-th rows of the matrix $U=\Big( u_0,\ldots , u_k\Big)$.
Let $\rho$ be a permutation of the set $\{i_1, i_2, \dots, i_{n-k-1}\}=\{0, \dots, n-1\}\setminus J$. Then for all $J\subset \{0, 1, \dots, n-1\}$, $|J|=k+1$ and permutations $\rho$ of the set $\{0, 1, \dots, n-1\}\setminus J$, there exist $\varepsilon_{J, \rho}=\pm 1$ such that 
\[
\det \Big(u_0, \ldots,  u_k, v_{k+1}, \ldots, v_{n-1}\Big)=\]
\[
\sum_{\substack{J \subset \{0, 1, \dots, n-1\}\\|J|=k+1} }\sum_{\rho}\varepsilon_{J, \rho} v_{k+1} (\rho (i_1)) v_{k+2} (\rho (i_2))\cdots v_{n-k}(\rho (i_{n-k}))  U_J =\pm 1.
\]
The claim follows directly from the determinant expansion above,
as the greatest common divisor of all $(k+1)\times (k+1)$ minors of the matrix $U$ must divide $1$.

Now we prove that if the greatest common divisor of all $(k+1)\times (k+1)$ is $1$, the set of a vectors $u_0,\ldots, u_{k}$ can be  extended to a basis of $\Z^n$, that is, there is a matrix $ \Big(u_0, \ldots, u_k, v_{k+1}, \ldots, v_{n-1}\Big)$ whose determinant is $\pm 1$. Recall that due to the condition on $(k+1)\times (k+1)$ minors, $\gcd (u_i(1), \dots, u_i(n))=1$ for all $0\leq i\leq k$.

Let $A$ be an integer matrix. Let $T_{(i, j;\lambda)}$, $i\neq j$ be a transformation that adds the $j$th row multiplied by $\lambda$ to the $i$th row and let $S_{(i, j;\lambda)}$ be a transformation that adds $j$th column multiplied by $\lambda $ to the $i$th column. Denote by $\bar{T}_{(i, j;\lambda)}\colon =T_{(i, j;-\lambda)}$  and $\bar{S}_{(i, j;\lambda)}\colon =S_{(i, j;-\lambda)}$. Define the relation $\sim$ on the set of matrices by saying that  $A\sim B$ if matrix $B$ can be obtained from $A$ by applying  transformations $T$ and $S$ finitely many times. The following identities hold
\[
(\bar{T}\circ T)(A)=A \text{ and } (\bar{S}\circ S)(A)=A.
\] 
Thus, the relation $\sim$ is an equivalence relation.

\begin{lemma}\label{apply} Suppose that $A$ is a $n\times k$ matrix and let $\gcd (a_{1, 1}, \dots, a_{1, n})=d.$ Then $A\sim B$, where $B$ is a $n\times k$ matrix such that $b_{1, 1}=d$, $b_{1, 2}=\cdots=b_{1, k}=0$. Moreover, if $d=1$ matrix $B$ may also satisfy that $b_{2, 1}=\cdots=b_{n, 1}=0$.
\end{lemma}

\begin{proof}
For $n=2$, the statement follows by the Euclidian algorithm for $\gcd(a_{1, 1}, a_{1, 2})=d$ which determines a sequence of $T$ transformations, which for $d=1$ are followed by a sequence of $S$ transformations. As $\gcd (a_{1, 1}, \dots, a_{1, n})=\gcd (a_{1, 1}, (a_{1, 2} \dots, a_{1, n}))$, the claim for general $n$ is obtained using induction on $n$ and the base case $n=2$.
\end{proof}

After applying $T_{(i, j;\lambda)}$ to $U$, the $(k+1)\times(k+1)$ minor $U'_J$ corresponding to a set $J=\{j_0, j_1, \dots, j_k\}\subset\{0, 1, \dots, n-1\}$ in matrix $T_{(i, j;\lambda)}(U)$ is equal to $U_J$ if $i\not\in J$ or both $i, j\in J$. If $i\in J$ and $j\not\in J$, then $U'_J=U_J \pm \lambda U_{(J\setminus \{i\})\cup \{j\}}$. However, the greatest common divisor of all $(k+1)\times (k+1)$ minors remains $1$ in $T_{(i, j;\lambda)}(U)$.  The same holds for the matrix $S_{(i, j;\lambda)}(U)$ because this matrix preserves all $(k+1)\times (k+1)$ minors.

Since $\gcd (u_1(1), \dots, u_1(n))=1$, by \Cref{apply} we have that $U\sim \Big(e_1, u'_1, \ldots, u'_k\Big)$ and the greatest common divisor of all $k\times k$ minors of $\Big( u'_1, \ldots, u'_k\Big)$ is equal to $1$. Then $\gcd (u'_1(1), \dots, u'_1(n))=1$ and by applying~\Cref{apply} to $\Big( u'_1, \ldots, u'_k\Big)$  we obtain that $U\sim\Big( e_1, e_2, u''_2, \dots, u''_k\Big) $. Continuing this process in the end, we obtain that $U\sim \Big(e_1, e_2, \dots, e_{k+1}\Big)$. Thus, there is a sequence $P_1, \dots, P_t$, where each $P_i$ is some $T$ or $S$ transformation such that 
\[
P_t \circ (P_{t-1} \circ (\cdots \circ (P_1))\cdots) (U)=\Big(e_1, e_2, \dots, e_{k+1}\Big).
\] 
Then 
\[
\bar{P}_1 \circ (\bar{P}_{2} \circ (\cdots \circ (\bar{P}_t))\cdots) \Big(e_1, e_2, \dots, e_{n-1}\Big)=\Big(u_0, u_1, \dots, u_{k}, v_{k+1}, \dots, v_{n-1}\Big)
\] 
for some vectors $v_{k+1}, \dots, v_{n-1}\in \Z^n$. Our claim follows since the transformations $T$ and $S$ preserve determinant(up to the sign).
\end{proof}

 By abusing notation, each generator of $C_k(X(\Z^n);\Z)$ corresponding to a $k$-simplex $\sigma$ will be also denoted by $\sigma$. Hence a $k$-chain $c$ in $C_k(X(\Z^n);\Z)$ is written as a sum $c=\sum_i\lambda_i\sigma_i=\sum_{i} \lambda_i u_0^iu_1^i\ldots u_k^i$, where $\lambda_i\in\Z$ and $u_0^i\succ\ldots\succ u_k^i$.
 For every permutation $\rho\in\Sigma_k$, we have $u_0u_1\ldots u_k=(-1)^{\sign(\rho)}u_{\rho(0)}\ldots u_{\rho(k)}$.

If $c=\sum_{i} \lambda_i u_0^iu_1^i\ldots u_k^i$ is a chain and $u$ is a vector such that $u=u_j^i$ for some $i$ and $j$, we write $u\in c$.
If $c=\sum_i\lambda_i\sigma_i$ is a chain such that a vector $u$ is a vertex of each simplex $\sigma_i$, 
then we write $c=ub$, where $b$ is the $(k-1)$-chain $b=\sum_i\lambda_i(\sigma_i\setminus \{u\})$.

\begin{proposition}
\label{connected}
For every $n\ge 2$, the simplicial complex $X(\Z^n)$ is connected.
\end{proposition}

\begin{proof}
To show the statement, we show that there is a path between the vertex $e_1$ and any other vertex $v\in\Z^n$  in $X(\Z^n)$. If $v(1)=0$, then $e_1v$ is a 1-simplex in $X(\Z^n)$ by Lemma~\ref{GCD} as $\gcd(v(2),\ldots, v(n))=1$ and thus $e_1$ and $v$ are connected. 

Assume that $v(1)\neq 0$. Then $v$ might not be connected to $e_1$ by a 1-simplex. Instead we show that there is a path $\alpha: v-u_1-\ldots -u_k-e_1$ for some $k$, where $u_i$ are vertices of $X(\Z^n)$ such that $|v(1)|> u_1(1)> \ldots > u_k(1)=0$ and any two consecutive vertices are connected by a 1-simplex. 
Since $v$ is a vertex of $X(\Z^n)$, together with some $n-1$ vectors of $\Z^n$, it forms a basis of $\Z^n$. Choose one of those $n-1$ vectors and denote it by $u_1$. Then by definition, $vu_1$ is a 1-simplex in $X(\Z^n)$. If $u_1(1)<0$ or $u_1(1)\geq |v(1)|$, replace $u_1$ with $u_1+\lambda v$ for some $\lambda\in\Z$
such that $0\le u_1(1)+\lambda v(1) < |v(1)|$. 
Notice that $u_1+\lambda v\in X(\Z^n)$ and that  $v (u_1+\lambda v)$ is a 1-simplex of $X(\Z^n)$. Relabel $u_1+\lambda v$ by $u_1$. We continue constructing the path $\alpha$ until we reach $u_k$ such that $u_k(1)=0$ for some $k$. Then $u_k$ is connected to $e_1$. This can be done in finitely many steps.  Therefore there is a path between $v$ and $e_1$, showing that $X(\Z^n)$ is connected.
\end{proof}

\begin{proposition}
\label{simply connected}
For $n\geq 3$, the simplicial complex $X(\Z^n)$ is simply connected.
\end{proposition}

\begin{proof}
Let $g\in \pi_1(X(\Z^n),e_n)$ and let $\bar\gamma\colon [0,1]\to X(\Z^n)$ be such that $[\bar\gamma]=g$. We consider a piecewise linear approximation of $\bar\gamma$ and use the same notation. Assume that there are $m$ vertices $u_i$ in $\bar\gamma$ which are achieved at  $u_i=\bar\gamma(t_i)$ for $i=0,\ldots ,m$ and that $\bar\gamma (0) = \bar\gamma (1) = e_n$.

Define the integer $M(\bar\gamma):=\max\{u_i(1)\mid i=0,\ldots,m\}$. Choose $\gamma\colon [0,1]\to X(\Z^n)$ such that  $[\gamma]= [\bar\gamma]=g$ which is of the same form as $\bar\gamma$ so that $M(\gamma )$ is as small as possible with respect to $\prec$.

If $M(\gamma)=0$, then by~\Cref{GCD}, a cone $e_1 u_{i-1} u_i\in X(\Z^n)$  for each $i$ and therefore $\Im\gamma\subset \cup_{i=1}^m e_1 u_{i-1} u_i$. Being a cone $\cup_{i=1}^m e_1 u_{i-1} u_i$  is contractible and hence $g=[\gamma]=0$.

If $M(\gamma)\succ 0$, suppose that $\gamma$ is such that $\Omega_{\gamma }(M(\gamma )):=|\{i\mid u_i(1)=M(\gamma)\}| >0$ and as small as possible amongst all representatives of $g$.
Let $k$ be such that $u_k(1)=M(\gamma)$. Note that $k\neq 0,m$ since $u_0(1)=u_m(1)=e_n(1)=0$.

Let $A$ be a matrix with determinant $1$ and with the first two columns $u_{k}$ and $u_{k+1}$. Then $A^{-1}u_{k}=e_{1}$ and $A^{-1}u_{k+1}=e_{2}$. Denote $A^{-1}u_{k-1}$ by $a_{0}$. 

We inductively construct a sequence of vectors  $a_{0},\ldots,a_{l-1}$, such that $\left|a_{0}(2)\right|>a_{1}(2)>\ldots>a_{l-1}(2)=0$.
Assume that a vector $a_{t}$ such that $a_{t}(2)\ne 0$ is defined. There exists a vector $a$ such that $e_1 a_{t}a \in X(\Z^n)$. Then for every vector $a'$ of the form $a'=a+\lambda_0 a_{t}+\lambda_1 e_1$ for $\lambda_0,\lambda_1\in\Z$, the $2$-simplex $e_1 a_{t}a' \in X(\Z^n)$. 

Let  $\lambda_0 \in \Z$ be a uniquely determined number such that $0\le a'(2)< \left|a_{t}(2)\right|$. For such $\lambda_0$, there is a uniquely determined $\lambda_1 \in \Z$ such that
\[
0\le \left(Aa'\right)(1)< \left|u_k(1)\right|.
\]
We define $a_{t+1}$ to be $a'$. 

Since $a_{l-1}e_1 \in X(\Z^n)$ and $a_{l-1}(2)=0$, by Lemma~\ref{GCD} the 3-simplex $e_1a_{l-1}e_2 \in X(\Z^n)$. We denote $e_2$ by $a_l$.
\begin{figure}[h]
\begin{center}
\begin{tikzpicture}[scale=0.8]
\filldraw (0,0)circle (0.05)--node[above]{$\gamma$} (200:2) circle (0.05)
(0,0)--(220:2) circle (0.05)
(0,0)--(240:2) circle (0.05)
(0,0)--(320:2) circle (0.05)
(0,0)--(340:2) circle (0.05);
\draw (200:2)-- (220:2)--(240:2)
(320:2)--(340:2);
\draw[dotted] (240:2) arc (240:320:2) node[midway,above]{$\gamma'$};
\filldraw[gray] (200:2)--++(220:2)node[left]{$u_{k-2}$} circle (0.05)--++(240:0.5)
(340:2)--++(320:2)node[left]{$u_{k+2}$} circle (0.05)--++(300:0.5);

\draw (200:2) node[left]{$u_{k-1}=b_0$} (220:2.4) node{$b_1$} (240:2.4) node{$b_2$}
(320:2.4) node{$b_{l-1}$} (340:2) node[right]{$u_{k+1}=b_l$} (0,0.4) node{$u_k$};
\draw[->] (4.3,-0.5)--node[above]{$A^{-1}$}++(1.5,0);
\draw[->] (5.8,-1.5)--node[above]{$A$}++(-1.5,0);
\begin{scope}[xshift=8.5cm]
\filldraw (0,0)circle (0.05)--(200:2) circle (0.05)
(0,0)--(220:2) circle (0.05)
(0,0)--(240:2) circle (0.05)
(0,0)--(320:2) circle (0.05)
(0,0)--(340:2) circle (0.05);
\draw (200:2)-- (220:2)--(240:2)  (320:2)--(340:2);
\draw[dotted] (240:2) arc (240:320:2);

\draw (200:2.4) node{$a_0$} (220:2.4) node{$a_1$} (240:2.4) node{$a_2$}
(320:2.4) node{$a_{l-1}$} (340:2.4) node{$a_l$} (0,0.4) node{$e_1$};

\end{scope}
\end{tikzpicture}
\end{center}
\caption{}
\end{figure}
Let $b_{i}:=Aa_{i}$ for $i=0,\ldots,l$.
Then $\gamma([t_{k-1}, t_{k+1}])\subset \cup_{i=1}^l u_k b_{i-1}b_i$. So there exists $\gamma'$ such that $[\gamma']=[\gamma]$, $\gamma'(x)=\gamma(x)$ for $x\not\in (t_{k-1}, t_{k+1})$, and $\gamma'([t_{k-1}, t_{k+1} ])\subset \cup_{i=1}^l b_{i-1}b_i$. Hence, $u_k\not\in \Im\gamma'$ and thus
$\Omega_{\gamma'}(M(\gamma ))<\Omega_{\gamma }(M(\gamma ))$ which is a contradiction with $\Omega_{\gamma }(M(\gamma ))$ being minimal. Thus $M(\gamma)=0$ and therefore $g=[\gamma ] = 0 $.
\end{proof}

\begin{proposition}
\label{al}
Let  $n\geq 2$ and $0\le k\le n-2$. Then every reduced homology group $\widetilde{H}_k(X(\Z^n);\Z)$ is trivial.
\end{proposition}

We first prepare technical ground for proving this proposition.
For every $k$-chain $c=\sum_{i=1}^m \lambda_i u_0^iu_1^i\ldots u_k^i$, define $\max(c)=\max\{u_0^i \, |\, 1\le i\le m\}=\max\{u\,|\,u\in c\}$, $M(c)=\max(c)(1)$, and for every $t\in\Z$, let $\Omega_c(t)=|\{u\in c \, |\, u(1)=t\}|$.

\begin{lemma}
\label{granica}
Let $c$ be a $k$-cycle in $C_k(X(\Z^n);\Z)$, $1\leq k \leq n-2$, such that $M(c)=0$. Then $c$ is a boundary.
\end{lemma}

\begin{proof}
Notice that $M(c)=0$ is equivalent to  $\Omega_c(t)=0$ for all $t\ne  0$. By Lemma~\ref{GCD}, since $c$ is a $k$-chain, $e_1c$ is a $(k+1)$-chain. By direct calculation, $\partial(e_1 c)=c$ showing that $c$ is a boundary.
\end{proof}

The order on vertices of $X(\Z^n)$ is such that some vertices have infinitely many smaller vertices. More precisely, we have the following lemma.

\begin{lemma} 
\label{infsmall}
Let $u \in X(\Z^n)$ be a vertex. Then there are infinitely many vertices in $X(\Z^n)$ smaller than $u$ if and only if $u \succeq e_{n-1}$.
\end{lemma}
\begin{proof}
All vectors of the form $(0,\ldots ,0,-1,x)$, $x\in \Z$ are vertices from $X(\Z^n)$ and are smaller than $e_{n-1}$. This proves the sufficient condition.

If $u\prec e_{n-1}$, then first $n-2$ coordinates of $u$ are 0 and the $(n-1)$st coordinate is equal to -1 or 0. If it is -1 then only vertices of the form $(0,\ldots ,0, -1,x ), x\prec u(n)$ and $(0,\ldots, 0, \pm1)$ are smaller than $u$ and there are finitely many of them. If the $(n-1)$st coordinate is 0, then $u=(0,\ldots, 0, \pm1)$ and it could only be bigger than the smallest vertex $(0,\ldots, 0,-1)$.
\end{proof}

Given Lemma~\ref{infsmall}, 
it is not enough to show that every $k$-cycle $c$ such that $M(c)\succ 0$ 
is equivalent to a $k$-cycle $c'$ such that $\max(c')\prec \max(c)$, since finitely many steps may not be enough to get from $c$ to a homologous cycle $\hat c$ such that $M(\hat c)=0$. 
To rectify that, we make use of  $\Omega_c(M(c))$, the number of all vertices of $c$ with first coordinate equal to $M(c)$.

We also need the following technical lemma which we will prove later.

\begin{lemma}
\label{mcvece0}
Let $c\in C_k(X(\Z^n);\Z)$ be a cycle such that $|M(c)|> 0$, $1\leq k \leq n-2$. Then there is a $k$-chain $c'$ homologous to $c$ such that $\max (c')\preceq \max (c)$ and $\Omega_{c'}(M(c))< \Omega_{c}(M(c))$.
\end{lemma}

Now we prove \Cref{al}.

\begin{proof}[Proof of Proposition~\ref{al}]
(i) For $k=0$, the result follows from Lemma~\ref{connected}.

(ii) For $k>0$,  we prove the statement by induction on $n$.\\
Let $n=2$. Then the statement holds by Lemma~\ref{connected}.\\
For $n\ge 3$, let $c=\sum_{i=1}^m\lambda_i \sigma_i=\sum_{i=1}^m\lambda_i u_0^i\ldots u_k^i\in C_k(X(\Z^n);\Z)$ be a $k$-cycle. If $M(c)=0$, then by Lemma~\ref{granica}, $c$ is a boundary. Notice that $M(c)=0$ is equivalent to $\Omega_c(t)=0$ for all $t\neq 0$.

If $|M(c)|>0$, then by Lemma~\ref{mcvece0}, the cycle $c$ is homologous to a $k$-cycle $c'$ for which $\max (c')\preceq \max (c)$ and $\Omega_{c'}(M(c))<\Omega_{c}(M(c))$. Iterating this process first we find a cycle $\bar c$ homologous to $c $ with $\max (\bar c)\preceq \max (c)$ and $\Omega_{\bar c}(M(c))=0$, that is, $M(\bar c) \prec M(c)$ and then a $k$-cycle $\hat c$ which is homologous to $c$ and $M(\hat c)=0$. Since $M(\hat c)=0$,  by \Cref{granica} the cycle $\hat c$ is a boundary and so is $c$.
\end{proof}

To prove~\Cref{mcvece0} we explicitly construct a $k$-chain $c'$. We start by outlining the main steps of the construction which are also depicted in
\Cref{sketch}. Let  $c=\sum_i\lambda_i\sigma_i$ be a $k$-cycle and let $u$ be the largest vertex in $c$, that is $u = \max (c)$. Partition the cycle $c$ into simplices that contain or do not contain the vertex $u$. Consider those simplices that contain $u$ and denote by $c_u$ the $(k-1)$-chain such that  $uc_u=\sum_{u\in \sigma_i}\lambda_{i}\sigma_i $. 

We want to replace $uc_u$ by a smaller homologous chain using that $c_u$ is a $(k-1)$-cycle and the induction hypothesis that $\widetilde{H}_k(X(\Z^{n-1});\Z)=0$ for $k\le n-3$.  To simplify the form of our simplices we change the coordinate system by  a matrix $A$ with determinant 1 and with the first column $u$. The matrix $A^{-1}$ induces a simplicial automorphism of $C_*(X(\Z^n);\Z)$; that is, a $k$-simplex $\sigma$ which is represented by $n\times(k+1)$ matrix is mapped to the $n\times(k+1)$ matrix $A^{-1}\sigma$. In that way $A^{-1}(u)=e_1$,
so if $u\in \sigma$ the simplex $A^{-1}\sigma$ has $e_1$ as one of its vertices. Denote by $\pi\colon \Z^n\to\Z^{n-1}$ the projection to the last $n-1$ coordinates. Then, $\pi(A^{-1}(c_u))$ is a cycle in $C_{k-1}(X(\Z^{n-1});\Z)$ and by the induction hypothesis there exists a $k$-chain $d$ such that $\partial d=\pi(A^{-1}(c_u))$. To finish, we need $\iota \colon \Z^{n-1} \lra \Z^n$ which is a section of $\pi $. The map $\iota $ cannot be always defined with all the properties needed. Thus we introduce a new invariant $L$ on chains, so that when $L(c)=0$ we can define $\iota$. If $L(c)>0$ we construct a $k$-chain $c''$ homologous to $c$ with $\max (c'')=\max (c)$, $\Omega_{c''}(M(c))=\Omega_c(M(c))$ and $L(c'')<L(c)$. Therefore, in at most $L(c)$ steps we will obtain $c''$ with $L(c'')=0$ and construct $\iota$ and a chain $c'$ in terms of $\iota$ and $d$ required by~\Cref{mcvece0}.

\begin{center}
\begin{figure}
\begin{tikzpicture}[scale=0.9]
\draw (0:1.3)--node[above right]{$c_u$}(30:1.0)--(65:0.8)--(100:0.7)--(140:0.9)--(170:1.1)--(230:0.7)--(290:0.8)--(340:1.2)--cycle;
\node(0,0) at (-90:1.1) {$\sum_{i\in S} \lambda_i u_1^i\ldots u_k^i$};
\draw[->] (2,0)--node[above]{$A^{-1}$}(3,0);
\begin{scope}[xshift=5cm]
\draw (0:1.3)--node[above right]{$A^{-1}(c_u)$}(30:1.0)--(65:0.8)--(100:0.7)--(140:0.9)--(170:1.1)--(230:0.7)--(290:0.8)--(340:1.2)--cycle;
\node(0,0) at (-90:1.1) {$\sum_{i\in S} \lambda_i v_1^i\cdots v_n^i$};
\end{scope}
\draw[->] (7,0)--node[above]{$\pi$}(8,0);
\begin{scope}[xshift=10cm]
\draw (0:1.3)--node[above right]{$\pi\left(A^{-1}(c_u)\right)$}
(30:1.0)--(65:0.8)--(100:0.7)--(140:0.9)--(170:1.1)--(230:0.7)--(290:0.8)--(340:1.2)--cycle;
\node(0,0) at (-90:1.1) {$\sum_{i\in S} \lambda_i \pi(v_1^i)\cdots \pi(v_n^i)$};
\end{scope}
\draw[->,decorate,decoration={snake,amplitude=.4mm,segment length=2mm,post length=1mm}] (10,-1.5)--node[left,text width=3.5cm]{exists a chain $d$ such that $\partial d=\pi\left(A^{-1}(c_u)\right)$}++(0,-2);
\begin{scope}[yshift=-4.5cm]
\draw[red!50] (-0.55,0.2)--(0.4,0)--(0,0.4)--cycle;
\draw[red!50] (-0.55,0.2)--(-0.1,-0.3)--(0.4,0);
\draw[red!50] (0.4,0)--(0:1.3);
\draw[red!50] (0.4,0)--(30:1.0)--(0,0.4)--(65:0.8);
\draw[red!50] (0,0.4)--(100:0.7);
\draw[red!50] (0,0.4)--(140:0.9);
\draw[red!50] (-0.55,0.2)--(140:0.9);
\draw[red!50] (-0.55,0.2)--(170:1.1);
\draw[red!50] (-0.55,0.2)--(230:0.7)--(-0.1,-0.3)--(290:0.8)--(0.4,0);
\draw[red!50] (290:0.8)--(0:1.3);
\draw[red!50] (0,2)--(-0.55,0.2) (0,2)--(0.4,0) (0,2)--(0,0.4) (0,2)--(-0.1,-0.3);
\draw[gray] (0:1.3)--(30:1.0)--(65:0.8)--(100:0.7)--(140:0.9);
\draw (140:0.9)--(170:1.1)--(230:0.7)--(290:0.8)--(340:1.2)--(0:1.3);
\draw[gray] (0,2)--(30:1.0) (0,2)--(65:0.8) (0,2)--(100:0.7) (0,2)--(140:0.9);
\draw (0,2)--(0:1.3) (0,2)--(170:1.1) (0,2)--(230:0.7) (0,2)--(290:0.8) (0,2)--(340:1.2);
\node[red](0,0) at (-90:1.1) {$u A\left(\iota(d)\right)$};
\node at(0.3,2) {$u$};
\draw[<-] (2,0)--node[above]{$A$}(3,0);
\begin{scope}[xshift=5cm]
\draw[red!50] (-0.55,0.2)--(0.4,0)--(0,0.4)--cycle;
\draw[red!50] (-0.55,0.2)--(-0.1,-0.3)--(0.4,0);
\draw[red!50] (0.4,0)--(0:1.3);
\draw[red!50] (0.4,0)--(30:1.0)--(0,0.4)--(65:0.8);
\draw[red!50] (0,0.4)--(100:0.7);
\draw[red!50] (0,0.4)--(140:0.9);
\draw[red!50] (-0.55,0.2)--(140:0.9);
\draw[red!50] (-0.55,0.2)--(170:1.1);
\draw[red!50] (-0.55,0.2)--(230:0.7)--(-0.1,-0.3)--(290:0.8)--(0.4,0);
\draw[red!50] (290:0.8)--(0:1.3);
\draw[red!50] (0,2)--(-0.55,0.2) (0,2)--(0.4,0) (0,2)--(0,0.4) (0,2)--(-0.1,-0.3); 
\draw[gray] (0:1.3)--(30:1.0)--(65:0.8)--(100:0.7)--(140:0.9);
\draw (140:0.9)--(170:1.1)--(230:0.7)--(290:0.8)--(340:1.2)--(0:1.3);
\draw[gray] (0,2)--(30:1.0) (0,2)--(65:0.8) (0,2)--(100:0.7) (0,2)--(140:0.9);
\draw (0,2)--(0:1.3) (0,2)--(170:1.1) (0,2)--(230:0.7) (0,2)--(290:0.8) (0,2)--(340:1.2);
\node[red](0,0) at (-90:1.1) {$e_1 \iota(d)$};
\node at(0.3,2) {$e_1$};
\end{scope}
\draw[->,decorate,decoration={snake,amplitude=.4mm,segment length=2mm,post length=1mm}] (8,0)--node[above]{$\iota$}(7,0);
\begin{scope}[xshift=10cm]
\draw[red] (-0.55,0.2)--(0.4,0)--(0,0.4)--cycle;
\draw[red] (-0.55,0.2)--(-0.1,-0.3)--(0.4,0);
\draw[red] (0.4,0)--(0:1.3);
\draw[red] (0.4,0)--(30:1.0)--(0,0.4)--(65:0.8);
\draw[red] (0,0.4)--(100:0.7);
\draw[red] (0,0.4)--(140:0.9);
\draw[red] (-0.55,0.2)--(140:0.9);
\draw[red] (-0.55,0.2)--(170:1.1);
\draw[red] (-0.55,0.2)--(230:0.7)--(-0.1,-0.3)--(290:0.8)--(0.4,0);
\draw[red] (290:0.8)--(0:1.3);
\draw (0:1.3)--(30:1.0)--(65:0.8)--(100:0.7)--(140:0.9)--(170:1.1)--(230:0.7)--(290:0.8)--(340:1.2)--cycle;
\node[red](0,0) at (-90:1.1) {$d$};
\end{scope}
\end{scope}
\end{tikzpicture}
\caption{A schematic diagram of the construction of an intermediate cycle $c'$. On the left hand side, chains are in $C_*(X(\Z^n);\Z)$, in the middle chains are in $C_*(X(\Z^n);\Z)$ but vectors are written in a new basis which has $u$ as the first coordinate vector, and on the right hand side chains are in $C_*(X(\Z^{n-1});\Z)$.}\label{sketch}
\end{figure}
\end{center}
 
Let $f\colon \Z^n\to \Z^m$ be a map and let  $\sigma=u_0u_1\ldots u_k$ be a $k$-simplex in $X(\Z^n)$. Define $f(\sigma)$ to be $f(u_0)f(u_1)\ldots f(u_k)$. Note that $f(\sigma)$ is not necessary a simplex in $X(\Z^m)$ as $\{f(u_0),f(u_1),\ldots, f(u_k) \}$ might not be extendable to a basis of $\Z^n$. 

\begin{lemma}
\label{matrix}
Let $A\colon \Z^n\to\Z^n$ be a linear map such that $\det A=\pm 1$. Then $A$ induces a simplicial isomorphism $A\colon X(\Z^n) \to X(\Z^n)$. 
\end{lemma}
\begin{proof}
Because $\det A = \pm 1$, there is a linear map $A^{-1}\colon \Z^n \lra \Z^n$ that sends bases into bases. Therefore, maps $A$ and $A^{-1}$ map simplices of $X(\Z^n)$ into same dimensional simplices of $X(\Z^n)$ and thus induce simplicial maps which are inverse to each other.
\end{proof}

\begin{lemma}
\label{e1 pi}
Let $\pi\colon \Z^n\to\Z^{n-1}$ be the projection onto the last $n-1$ coordinates. Then $\pi (u_1) \pi (u_2)\ldots \pi (u_k)\in X(\Z^{n-1})$  if and only if $e_1u_1u_2\ldots u_k \in X(\Z^n)$.
\end{lemma}
 \begin{proof}
As the set of $k\times k$ minors of the $(n-1) \times k$ matrix $\Big ( \pi(u_1) \cdots \pi (u_k) \Big)$ is equal to the set of all $(k+1)\times (k+1)$ minors of the $n \times (k+1)$ matrix $\Big( e_1, u_1, \dots ,u_k \Big)$, the statement follows by~Lemma~\ref{GCD}.
 \end{proof}

\begin{lemma}
\label{iota}
Let $\iota\colon \Z^{n-1}\to\Z^n$ be any map such that $\pi\circ \iota=\Id$. 
Then $e_1\iota (\sigma)\in X(\Z^n)$ for all $\sigma \in X(\Z^{n-1})$ and $\iota$ induces a simplicial map $\iota\colon X(\Z^{n-1})\to X(\Z^n)$.
\end{lemma}
\begin{proof}
Let $\sigma = v_1\ldots v_k \in X(\Z^{n-1})$, $1\leq k\leq n-1$. The greatest common divisor of the set of all $(k+1)\times (k+1)$ minors of the $n \times  (k+1)$ matrix $\Big( e_1, \iota (v_1),\dots ,\iota (v_k) \Big)$ is equal to the greatest common divisor of the set of all $k\times k$ minors of the $(n-1)\times k$ matrix $\Big( v_1,\dots  ,v_k \Big)$ which is equal to 1 and hence $e_1\iota(\sigma )\in X(\Z^n)$. Since $e_1\iota(\sigma )\in X(\Z^n)$, we have $\iota(\sigma )\in X(\Z^n)$, proving that $\iota $ induces a simplicial map.
\end{proof}

Let  $u=\max(c)$ and let $S=\{i \,|\ u_0^i=u\}$. Then $S\ne\emptyset$. Note that $u\ne u_j^i$ for any $i$ and $j\ge 1$. Since $c$ is a cycle, expanding the boundary $\partial c$, we have
\begin{equation}
\label{boundary}
\begin{array}{rl}
0=&\partial \left(\sum_{i\in S}\lambda_i u_0^i\ldots u_k^i+\sum_{i\not\in S}\lambda_i u_0^i\ldots u_k^i\right)\\=&
\sum_{i\in S}\lambda_i u_1^i\ldots u_k^i-u\partial \left(\sum_{i\in S}\lambda_i u_1^i\ldots u_k^i\right)+\partial\left(\sum_{i\not\in S}\lambda_i u_0^i\ldots u_k^i\right).
\end{array}
\end{equation}
Denote by $c_u$ the sum $c_u=\sum_{i\in S}\lambda_i u_1^i\ldots u_k^i$. Since the middle summands in \eqref{boundary} are the only one that contain the vector $u$, $u\partial (c_u)=0$ and thus $c_u$ is a $(k-1)$-cycle in $C_{k-1}(X(\Z^n);\Z)$.

To find a condition when a vertex $v$ is in the link of $u$, we write vertices in a new basis with the first vector $u$. In order to do that we choose a matrix $A=(a_{i,j})_{1\le i,j\le n}$ such that its first column is the vector $u$ and $\det A=1$. Then $Ae_1=u$ and consequently $A^{-1}u=e_1$. For all $i$ and $j$, define $v_j^i=A^{-1}u_j^i$. Then the $(k-1)$-cycle $c_u$ is mapped to a $(k-1)$-cycle $A^{-1}(c_u)=\sum_{i\in S}\lambda_i v_1^i\ldots v_k^i$ in $C_{k-1}(X(\Z^{n});\Z)$.

Let $\pi\colon \Z^n\to \Z^{n-1}$ be the projection on the last $n-1$ coordinates. By \Cref{e1 pi}, $\pi(A^{-1}(c_u))$ is a $(k-1)$-cycle in $C_{k-1}(X(\Z^{n-1});\Z)$. In general there is no inclusion $\iota\colon\Z^{n-1}\to\Z^n$ such that $\iota(\pi(A^{-1}(c_u)))=A^{-1}(c_u)$, since the restriction of $\pi$ on $c_u$ is not necessarily injective. 
If for all $v\in c_u$ we have $0\le v(1)<|u(1)|$ then the following lemma allows us to define $\hat\iota\colon \Z^{n-1}\to \Z^n$ such that $\hat\iota(\pi(A^{-1}(c_u)))=A^{-1}(c_u)$

\begin{lemma}
\label{uniquesol}
For given integers $x_2,\ldots ,x_n$, $a_{1,j}$, $a_{1,1}\neq 0$, there is a unique integer
$x_1$ for which the inequality 
\[
0\le a_{1,1} x_1+a_{1,2}x_2+\ldots +a_{1,n}x_n<|a_{1,1}|
\]
holds.
\end{lemma}
\begin{proof}
There is a unique number $r$ with $0\leq r < |a_{1,1}|$ such that 
$a_{1,2}x_2+\ldots +a_{1,n}x_n \equiv r \pmod{|a_{1,1}|}$. Then
\[
x_1 := -(a_{1,2}x_2+\ldots +a_{1,n}x_n -r )/a_{1,1}. \qedhere
\]
\end{proof}

By Lemma~\ref{uniquesol}, if $0\le u_j^i(1)<|u(1)|$ the number $v_j^i(1)$ is uniquely determined by the numbers $v_j^i(2),\ldots, v_j^i(n)$ and the matrix $A$ as it is a unique solution of the equation $u_j^i(1)=a_{1,1}v_j^i(1)+\ldots+ a_{1,n}v_j^i(n)$ having in mind that $a_{1,1}=u(1)$.

That allows us to define the following maps.
\begin{definition}
\label{pi}
Define $\pi\colon \Z^n\to \Z^{n-1}$ and $\hat\iota\colon\Z^{n-1}\to\Z^n$ by $\pi(x_1,\ldots,x_n)=(x_2,\ldots,x_n)$ and $\hat\iota(x_2,\ldots,x_n)=(x_1,\ldots,x_n)$, where by Proposition~\ref{uniquesol} $x_1$ is the only solution of the inequality $0\le a_{1,1}x_1+\ldots +a_{1,n}x_n<|u(1)|$.
\end{definition}

Therefore, we have $\hat\iota(\pi(v_j^i))=v_j^i$ if and only if $0\le u_j^i(1)<|u(1)|$. 
If there are vectors in $c_u$ with the first coordinate negative or equal to $|u(1)|$,  then we need to define an inclusion $\Z^{n-1}\to\Z^n$ which modifies the inclusion $\hat\iota$ in finitely many vectors. The inclusion depends on the number of vertices $v\in c_u$ with $v(1)<0$ or $v(1)= |u(1)|$. Therefore we define
\[
L(c) := \max_{i\in S}
|\{ j \text{ }|\text{ } \hat\iota (\pi (v_j^i))\neq v_j^i \in A^{-1}(c_u) \}| . 
\]

\noindent{\it Proof of \Cref{mcvece0}.}
\subsection*{(i) Let $L(c)=0$}
Then $\hat\iota(\pi(v))=v$ for all $v\in A^{-1}(c_u)$.
Because $e_1A^{-1}(c_u)$ is a $k$-chain in $C_{k}(X(\Z^n);\Z)$, by \Cref{e1 pi} $\pi(A^{-1}(c_u))$ is a $(k-1)$-chain in $C_{k-1}(X(\Z^{n-1});\Z)$. Furthermore, since $c_u$ is a cycle so is $\pi(A^{-1}(c_u))$. By the induction hypothesis, there exists a $k$-chain $d\in C_k(X(\Z^{n-1});\Z)$ such that $\partial(d)=\pi(A^{-1}(c_u))$. Then $A^{-1}(c_u)=\hat\iota(\pi(A^{-1}(c_u)))=\hat\iota(\partial (d))=\partial (\hat\iota(d))$, and hence by Lemma~\ref{iota}
\[
\partial(e_1\hat\iota(d))=\hat\iota(d)-e_1\partial(\hat\iota(d))=\hat\iota(d)-e_1 A^{-1}(c_u)=\hat\iota(d)- A^{-1}(u c_u).
\]
Therefore, the homology class of $u c_u- A(\hat\iota(d))$ is trivial and we constructed a chain $c'=A(\hat\iota(d))+\sum_{i\not\in S} \lambda_i u_0^i\ldots u_k^i$ such that $c$ is homologous to $c'$. Note that, $A(\hat \iota (v)) \prec u$ for all vertices $v\in X(\Z^{n-1})$ by definition of $\hat \iota$, and therefore $\max (A(\hat \iota (d)))\prec u $. Because $\max (A(\hat\iota(d)))\prec \max (u c_u)=\max (c)$, we get $\Omega_{c'}(M(c))<\Omega_{c}(M(c))$.

\subsection*{(ii) Let \texorpdfstring{$L(c)\geq 1$}{$l(c)>1$}}
Recall that $A^{-1}(c_u)=\sum_{i\in S}\lambda_i\tau_i=\sum_{i\in S}\lambda_iv_1^i\ldots v_k^i$. Define
\begin{multline*}
\mathcal{M}=\{w_1\ldots w_L \in X(\Z^n)\,|\, \\
\hat\iota (\pi (w_k))\neq w_k \text{ for all } k \text{ and there is }  i \in S \text{ such that }w_k\in \tau_i \text{ for all }k\}.
\end{multline*}
Let $T_{w_1\ldots w_L}=\{ i\in S\,|\, w_1,\ldots ,w_L\in \tau_i \}$ and $T:=\cup_{w_1\ldots w_L\in \mathcal{M}} T_{w_1\ldots w_L}$. 
Note that $T_{w_1\ldots w_L}\cap T_{w'_1\ldots w'_L}=\emptyset$ if $w_1\ldots w_L\neq w'_1\ldots w'_L$ by the definition of $L$. 

Denoting 
\[
\sum_{i\in T_{w_1\ldots w_L}}\lambda_iv_1^i\ldots v_k^i = w_1\ldots w_L b_{w_1\ldots w_L}
\]
for a chain $b_{w_1\ldots w_L}\in C_{k-L-1}(X(\Z^n);\Z)$, rewrite 
\[
A^{-1}(c_u) = w_1\ldots w_L b_{w_1\ldots w_L} +\sum_{i\notin T_{w_1\ldots w_L}}\lambda_iv_1^i\ldots v_k^i
\]
and
\[
0=\partial (A^{-1}(c_u)) = \partial (w_1\ldots w_L)b_{w_1\ldots w_L} + (-1)^L w_1\ldots w_L \partial (b_{w_1\ldots w_L}) + 
\]
\[
+ \partial \Big( \sum_{i\notin T_{w_1\ldots w_L}}\lambda_iv_1^i\ldots v_k^i \Big).
\]
Because the second summand is the only one containing $w_1\ldots w_L$, then   $b_{w_1\ldots w_L}$ is a cycle for all $w_1\ldots w_L \in \mathcal{M}$.

\begin{definition}
\label{mu1}
Let $\nu \colon \Z^n \lra \Z^n$ be a map given by
\[
\nu (x_1 , \ldots , x_n) = (-x_1, \ldots , -x_n).
\]
\end{definition}
Note that the map $\nu$ extends to a nondegenerative simplicial map $\nu \colon X(\Z^n) \lra X(\Z^n)$ with the following property: for any vertex $w$ and a simplex $\sigma$ in $X(\Z^n)$, we have $w\sigma \in X(\Z^n) $ if and only if $\nu(w)\sigma \in X(\Z^n)$.

\begin{definition}
\label{mu L}
For $k\geq 0$ and $\mu = (\mu_0 , \ldots , \mu_k)\in \{-1,\, 1\}^{k+1}$, define a map $\mu\colon C_{k}(X(\Z^n); \Z)\to C_{k}(X(\Z^n); \Z)$ by 
$\mu(u_0\ldots u_k)= \mu_0(u_0)\ldots \mu_k(u_k)$, where 
\[
\mu_i(u_i)=\left\{\begin{array}{ll}
u_i & \text{ if }\mu_i=1\\
\nu (u_i) & \text{ if } \mu_i=-1.\\
\end{array}\right.
\]
Let $|\mu | = |\{i\,|\, \mu_i = -1 \}|$.
\end{definition}
Note that for a $k$-simplex $\sigma \in X(\Z^n)$, $\mu (\sigma )$ is a $k$-simplex . We have the following property of the map $\mu$.

\begin{lemma}
\label{n-1 cycle}
For $k\geq 0$ and every simplex $u_0\ldots u_k \in X(\Z^n)$,
\begin{equation}
    \label{formula}
\sum_{\mu \in \{-1,\, 1\}^{k+1}} (-1)^{|\mu |}\partial (\mu (u_0\ldots u_k)) = 0.
\end{equation}
\end{lemma}
\begin{proof}
We prove the lemma by induction on $k$.

For $k=0$, we have
\[
\partial u_0 - \partial \nu (u_0) = 0.
\]
Now assume that the lemma is true for $k$. Then the left hand side of \eqref{formula} is equal to
\[
\sum_{\mu \in \{-1,\, 1\}^{k}} (-1)^{|\mu |}\partial (\mu (u_0\dots u_{k-1})u_k) - \sum_{\mu \in \{-1,\, 1\}^{k}} (-1)^{|\mu  |}\partial (\mu (u_0\ldots u_{k-1})  \nu (u_k))=
\]
\[
=\sum_{\mu \in \{-1,\, 1\}^{k}} (-1)^{|\mu |}\partial (\mu (u_0\ldots u_{k-1})) u_k + (-1)^{k}\sum_{\mu \in \{-1,\, 1\}^{k}} (-1)^{|\mu |}\mu (u_0\ldots u_{k-1}) -
\]
\[
-\sum_{\mu \in \{-1,\, 1\}^{k}} (-1)^{|\mu |}\partial (\mu (u_0\ldots u_{k})) \nu (u_k) - (-1)^{k}\sum_{\mu \in \{-1,\, 1\}^{k}} (-1)^{|\mu |}\mu (u_0\ldots u_{k-1})=0
\]
because by induction hypothesis, the first and the third sums are 0, while the second and the fourth sums cancel each other.
\end{proof}

Because $e_1w_1\ldots w_L b_{w_1\ldots w_L}$ is a $k$-chain, $e_1\mu (w_1\ldots w_L) b_{w_1\ldots w_L}$ is a $k$-chain for every $\mu$. Furthermore,
\[
\partial \Big( \sum_{\mu \in \{-1,\, 1\}^L} (-1)^{|\mu |}\mu (w_1\ldots w_L)b_{w_1\ldots w_L}\Big) =
\]
\[
=\sum_{\mu \in \{-1,\, 1\}^L} (-1)^{|\mu |}\partial (\mu (w_1\ldots w_L))b_{w_1\ldots w_L}- \sum_{\mu \in \{-1,\, 1\}^L} (-1)^{|\mu |+L}\mu (w_1\ldots w_L)\partial(b_{w_1\ldots w_L})
\]
\[
=0.
\]
By induction hypothesis, there is a $k$-chain $d_{w_1\ldots w_L}\in C_k(X(\Z^{n-1});\Z)$ such that
\[
\partial (d_{w_1\ldots w_L}) = \pi \Big(\sum_{\mu \in \{-1,\, 1\}^L} (-1)^{|\mu |}\mu (w_1\ldots w_L)b_{w_1\ldots w_L}\Big).
\]
For $w_1\ldots w_L \in \mathcal{M}$, define a map $\iota_{w_1\ldots w_L}\colon \Z^{n-1} \lra \Z^n$ by
\[
\iota_{w_1\ldots w_L}(w) =\left\{\begin{array}{ll} \hat\iota (w)& \text{ if } w\neq \pi (w_i) \text{ for all } i \\
w_i & \text{ if } w=\pi (w_i).\\
\end{array}\right.
\]
Note that $\hat\iota (\pi (v)) =v $ for all vertices of $b_{w_1\ldots w_L}$ because maximal possible number of  vertices from simplices in $A^{-1}(c_u)$ with $\hat\iota (\pi (v)) \neq v $ is $L$, $w_1\ldots w_Lb_{w_1\ldots w_L}$ is made out of simplices from $A^{-1}(c_u)$ and $\hat\iota (\pi (w_i)) \neq w_i $. Therefore, $\hat\iota (\pi (b_{w_1\ldots w_L}))= b_{w_1\ldots w_L}$. 
Hence
\[
\partial (\iota_{w_1\ldots w_L}(d_{w_1\ldots w_L})) = w_1\ldots w_L b_{w_1\ldots w_L} - \sum_{\genfrac{}{}{0pt}{}{\mu\in\{-1,1\}^L}{\mu\ne(1,\ldots,1)}} (-1)^{|\mu |} \iota_{w_1\ldots w_L}(\pi(\mu(w_1\ldots w_L)))b_{w_1\ldots w_L}
\]
and
\[
\partial (e_1\iota_{w_1\ldots w_L}(d_{w_1\ldots w_L})) = \iota_{w_1\ldots w_L}(d_{w_1\ldots w_L}) - e_1 \partial (\iota_{w_1\ldots w_L}(d_{w_1\ldots w_L})).
\]
Therefore
\[
e_1w_1\ldots w_L b_{w_1\ldots w_L} \]
is homologous to 
\[\iota_{w_1\ldots w_L}(d_{w_1\ldots w_L}) + e_1\sum_{\genfrac{}{}{0pt}{}{\mu\in\{-1,1\}^L}{\mu\ne(1,\ldots,1)}} (-1)^{|\mu |} \iota_{w_1\ldots w_L}(\pi(\mu(w_1\ldots w_L)))b_{w_1\ldots w_L}.
\]
Finally,
\[
c = A\Big( e_1A^{-1}(c_u) + \sum_{i\notin S}\lambda_iv_0^i\ldots v_k^i \Big) =
\]
\[
=A\Big( \sum_{w_1\ldots w_L\in\mathcal{M}}e_1w_1\ldots w_L b_{w_1\ldots w_L} + \sum_{i\in S\setminus T} \lambda_iv^i +\sum_{i\notin S}\lambda_iv^i\Big)
\]
is homologous to
\[
c'' := A \Big(\sum_{w_1\ldots w_L\in\mathcal{M}}  \iota_{w_1\ldots w_L}(d_{w_1\ldots w_L})\Big) + 
\]
\[
+ u A\Big(\sum_{w_1\ldots w_L\in\mathcal{M}}\sum_{\genfrac{}{}{0pt}{}{\mu\in\{-1,1\}^L}
{\mu\ne(1,\ldots,1)}} (-1)^{|\mu |} \iota_{w_1\ldots w_L}(\pi(\mu(w_1\ldots w_L)))b_{w_1\ldots w_L} \Big) +\sum_{i\notin T }\lambda_iu^i.
\]
Note that $\max (c'') \preceq u = \max (c)$ and therefore $M(c'') \preceq M(c)$. Moreover, $\Omega_{c''}(M(c))\leq \Omega_c(M(c))$ 
because we did not add a new vertex whose first coordinate is equal to $|u(1)|$ or larger using $\iota$'s. If $\max (c'')\prec \max (c)$ or $\Omega_{c''}(M(c)) < \Omega_c(M(c))$ then we are done since $c''$ is required cycle $c'$. 

If $\max (c'') = \max (c)$ and $\Omega_{c''}(M(c)) = \Omega_c(M(c))$ then $L(c'')<L(c)$ as in each summand in
\[
u A\Big(\sum_{w_1\ldots w_L\in\mathcal{M}}\sum_{\genfrac{}{}{0pt}{}{\mu\in\{-1,1\}^L} {\mu\ne(1,\ldots,1)}} (-1)^{|\mu |} \iota_{w_1\ldots w_L}(\pi(\mu(w_1\ldots w_L)))b_{w_1\ldots w_L} \Big)
\]
there is at least one less vertex $w$ such that $\iota_{w_1\ldots w_L} \pi (w) \neq w$ as at least one coordinate $j$ of $\mu$ is equal to $-1$ implying that $\mu (w_1\ldots w_L )$ contains $\nu (w_j)$ and $\iota_{w_1\ldots w_L} \pi (\nu (w_j)) \neq \nu (w_j)$, because $\pi (\nu (w_j)) \neq \pi (w_i)$ for each $i\neq j$.

Therefore we have constructed the required cycle $c''$ and completed the proof of~\Cref{mcvece0}.
\qed

\subsection{Links in \texorpdfstring{$X(\Z^n)$}{X(Zn)}}

Recall that for a simplicial complex $K$, the link of $\sigma$ in $K$ is defined by
\[
\Lk_K(\sigma) = \{\tau \in K\, | \, \tau \cup \sigma \in K ,\, \tau \cap \sigma = \emptyset \}.
\]

\begin{lemma}
\label{linkiso}
Let $\sigma_1 , \sigma_2 \in X(\Z^n)$ be two $m$-simplices, $0\leq m \leq n-1$. Then there is a simplicial homeomorphism $A\colon X(\Z^n)\lra X(\Z^n)$ which maps $\sigma_1$ into $\sigma_2$ and maps $\Lk_{X(\Z^n)}(\sigma_1 )$ isomorphicly onto $\Lk_{X(\Z^n)} (\sigma_2 )$.
\end{lemma}
\begin{proof}
It is enough to prove the statement for $\sigma_2 = e_1\ldots e_{m+1}$.
Let $\sigma_1 =  u_1\ldots u_{m+1} \in X(\Z^n)$. Then $\sigma$ can be extended by some vectors $u_{m+2}, \ldots ,u_n \in \Z^n$ to a basis $u_1,\ldots ,u_n$ of $\Z^n$. Then the matrix $A := \Big( u_1, \dots ,u_n \Big)$ has determinant $\pm 1$ and $Ae_i = u_i$ for all $i$. 

By Lemma~\ref{matrix}, the map $A^{-1} \colon X(\Z^n)\lra X(\Z^n)$ is a simplicial homeomorphism which maps $\sigma_1$ onto $\sigma_2$ and moreover $A^{-1}$ maps isomorphicly $\Lk(\sigma_1 )$ onto $\Lk(\sigma_2)$.
\end{proof}

By \Cref{linkiso}, all links of $(m-1)$-simplices in $X(\Z^n)$ are simplicially isomorphic. We consider the $(m-1)$-simplex 
\begin{equation}
\label{sigma}
\sigma := e_{n-m+1}\ldots e_n , 1\leq m \leq n-1. 
\end{equation}

\begin{lemma}
\label{pi-link}
\begin{itemize}

\item[(a)] Let $\pi ' \colon \Z^n \lra \Z^{n-m}$ be the projection onto the first $n-m$ coordinates. Then 
    \[
    \tau \in \Lk_{X(\Z^n)} (\sigma ) \text{ if and only if } \pi '(\tau) \in X(\Z^{n-m}). 
    \]

\item[(b)] Let $A$ be an $n\times n$ matrix with determinant 1 with the last $m$ columns  $e_{n-m+1}, \ldots , e_n$. Then $A$ maps $\Lk_{X(\Z^n)}(\sigma )$ isomorphicly onto $\Lk_{X(\Z^n)}(\sigma )$.

\item[(c)] Let $\pi \colon \Z^n \lra \Z^{n-1}$ be the projection onto the last $n-1$ coordinates. Then 
    \[
    \pi \colon \Lk_{X(\Z^n)}(e_1\sigma ) \lra \Lk_{X(\Z^{n-1})}(\pi (\sigma )) 
    \]
    is a nondegenerative simplicial surjection.

\item[(d)] Let $\iota \colon \Z^{n-1} \lra \Z^n$ be a map such that $\pi \circ \iota = \Id$. 
Then $\iota$ induces a simplicial map $\iota\colon X(\Z^{n-1})\lra X(\Z^n)$ such that $\Im (\iota ) \subset  \Lk_{X(\Z^n)}(e_1) $ and whose restriction 
\[
\iota \colon \Lk_{X(\Z^{n-1})}(\pi (\sigma)) \lra \Lk_{X(\Z^n)}(e_1\sigma )
\]
is a section of $\pi $.
\item[(e)] Let $\mu \in \{-1,\,1\}^m$. Then 
\[
\Lk_{X(\Z^n)} (\sigma ) = \Lk_{X(\Z^n)} (\mu (\sigma ))
\]
where $\mu (\sigma ):= \mu_1(e_{n-m+1})\ldots \mu_m( e_n)$.\\ 
If $\tau \in \Lk_{X(\Z^n)}(\sigma)$, then $\mu (\tau) \in \Lk_{X(\Z^n)}(\sigma )$ for all $k$-simplices $\tau \in X(\Z^n)$ and all $\mu \in \{-1,\,1\}^{k+1}$.
\end{itemize}

\end{lemma}
\begin{proof}
\begin{itemize}
   \item[(a)] Let $\tau = u_1\ldots u_k \in X(\Z^n)$. Then
    $\tau \in \Lk(\sigma )$ if and only if the gcd of all $(k+m)\times (k+m)$ minors of the matrix $\big( u_1,\dots, u_k,e_{n-m+1},\dots ,e_n  \Big)$ is equal to $1$, which is also equal to the gcd of all $k \times k$ minors of the matrix $\Big( \pi '(u_1),\ldots ,\pi '(u_k) \Big)$ if and only if $\pi '(\tau ) \in X(\Z^{n-m})$.
    
    \item[(b)] Let $\tau \in \Lk(\sigma )$. Then $\tau\sigma \in X(\Z^n)$. Since $A(\sigma )=\sigma$, we have $A(\tau) \in \Lk(\sigma )$ because $A(\tau )\sigma = A(\tau \sigma ) \in X(\Z^n)$. Therefore, $A\colon \Lk (\sigma ) \lra \Lk (\sigma )$ and it is an isomorphism since $A^{-1}$ is its inverse.
    
    \item[(c)] Let $\tau = u_1\ldots u_k \in \Lk(e_1\sigma )$. Then the gcd of all $(k+m+1)\times (k+m+1)$ minors of $\Big(e_1,u_1,\ldots, u_k, e_{n-m+1},\ldots ,e_n\Big)$ is equal to 1. As it is equal to the gcd of all $(k+m)\times (k+m)$ minors of $\Big( \pi(u_1),\ldots ,\pi(u_k),\pi(e_{n-m+1}),\ldots ,\pi(e_n) \Big)$, we have  $\pi (\tau)\in \Lk_{X(\Z^{n-1})}(\pi(\sigma ))$ and $\pi $ induces a simplicial map. Furthermore, for a simplex $\tau' \in \Lk_{X(\Z^{n-1})}(\pi(\sigma ))$, we have $\iota (\tau') \in \Lk(e_1\sigma )$ and $\pi (\iota (\tau')) = \tau '$, where $\iota$ is induced by $\iota (x_1,\ldots ,x_{n-1}) = (0,x_1,\ldots ,x_{n-1})$.
    
    \item[(d)] For every $k$-simplex $\tau \in X(\Z^{n-1})$, the gcd of all $(k+2)\times (k+2)$ minors of $\Big(e_1,\iota(\tau)\Big)$ is equal to the gcd of all $(k+1)\times (k+1)$ minors of $\Big(\tau\Big)$ which is equal to 1. By \Cref{GCD},  $\Im (\iota ) \subset  \Lk_{X(\Z^n)}(e_1) $.
    
    \item[(e)] It follows directly using Lemma~\ref{GCD} and the fact that the gcd's of all $(k+m+1)\times (k+m+1)$ minors of $\Big( \tau, \sigma \Big)$ and of $\Big( \tau ,\mu (\sigma)\Big)$ are equal for all $k$-simplices $\tau \in X(\Z^n)$. The other part of (e) is proved in the same way. \qedhere
\end{itemize}
\end{proof}

The proofs of the remaining statements for links in this subsection  follow the same lines as of the proof of appropriate statements of $X(\Z^n)$ using \Cref{pi-link} for technical adjustments. Therefore statements that are completely analogous  will be stated without proofs.

\begin{lemma}
\label{link connected}
For every $(m-1)$-simplex $\sigma \in X(\Z^n)$, $1\leq m\leq n-2$, the link $\Lk_{X(\Z^n)}(\sigma)$ is connected. 
\end{lemma}

\begin{proof}
Let $u,v\in\Lk(\sigma)$ be two vertices. If $\pi'(u)\ne \pi'(v)$ then by \Cref{connected} there exists a path between $\pi'(u)$ and $\pi'(v)$ in $X(\Z^{n-m})$ and then by \Cref{pi-link} (a) there exists a path between $u$ and $v$ in $\Lk(\sigma)$.
Let $\pi'(u)= \pi'(v)$. Since $m\leq n-2$, there exists a vertex $w$ such that $uw\in\Lk(\sigma)$. By \Cref{pi-link} (a) the simplex $vw$ is also in $\Lk(\sigma)$ so $v-w-v$ is a path in $\Lk(\sigma)$.
\end{proof} 

\begin{lemma}
\label{link 0}
Let $\sigma \in X(\Z^n)$ be an $(n-2)$-simplex. Then $\Lk_{X(\Z^n)}(\sigma )$ is an infinite 0-dimensional simplicial complex.
\end{lemma}
\begin{proof}
Let $\sigma = e_2\ldots e_n$, as in~\eqref{sigma}. The simplicial complex $\Lk(\sigma )$ is 0-dimensional since $X(\Z^n)$ is a pure $(n-1)$-dimensional simplicial complex. Since
\[
v\in \Lk(\sigma ) \text { if and only if } v(1)=\pm 1
\]
the link $\Lk(\sigma )$ is infinite.
\end{proof}

\begin{proposition}
\label{link simply connected}
For $n\geq 4$, $1\leq m \leq n-3$ and $\sigma $ an $(m-1)$-simplex in $X(\Z^n)$, the simplicial complex $\Lk_{X(\Z^n)}(\sigma )$ is simply connected.
\end{proposition}
\begin{proof}
The proof follows the lines of the proof of \Cref{simply connected} and thus we omit it here.
\end{proof}

\begin{proposition}
\label{link}
Let $n\geq 3$, let $\sigma \in X(\Z^n)$ be an $(m-1)$-dimensional simplex, $1\leq m \leq n-2$, and let $0\leq k \leq n-m-2$. Then every reduced homology group $\widetilde{H}_k(\Lk_{X(\Z^n)}(\sigma );\Z)$ is trivial.
\end{proposition}

We fix $\sigma $ as in~\eqref{sigma} throughout.

\begin{lemma}
\label{link granica}
Let $c$ be a $k$-cycle in $C_k(\Lk_{X(\Z^n)}(\sigma );\Z)$, $1\leq k \leq n-m-2$, such that $M(c)=0$. Then $c$ is a boundary.\qed
\end{lemma}



\begin{lemma}
\label{link mcvece0}
Let $c\in C_k(\Lk(\sigma );\Z)$ be a cycle such that $|M(c)|> 0$, $1\leq k \leq n-m-2$. Then there is a $k$-chain $c'$ homologous to $c$ such that $\max (c')\preceq \max (c)$ and $\Omega_{c'}(M(c))< \Omega_{c}(M(c))$.
\end{lemma}

Putting  all these statements together, we  prove \Cref{link}.

\begin{proof}[Proof of Proposition~\ref{link}]
(i) For $k=0$, the result follows from Lemma~\ref{link connected}.

(ii) For $k>0$,  we prove the statement by induction on $n$.\\
Let $n=3$. Then the statement holds by Lemma~\ref{link connected}.\\
For $n\ge 4$, let $c=\sum_{i=1}^m\lambda_i \sigma_i=\sum_{i=1}^m\lambda_i u_0^i\ldots u_k^i\in C_k(\Lk(\sigma );\Z)$ be a $k$-cycle. If $M(c)=0$, then by \Cref{link granica}, $c$ is a boundary. 

If $|M(c)|>0$, then by Lemma~\ref{link mcvece0}, the cycle $c$ is homologous to a $k$-cycle $c'$ for which $\max (c')\preceq \max (c)$ and $\Omega_{c'}(M(c))<\Omega_{c}(M(c))$. Iterating this process first we find a cycle $\bar c$ homologous to $c $ with $\max (\bar c)\preceq \max (c)$ and $\Omega_{\bar c}(M(c))=0$ and then a $k$-cycle $\hat c$ which is homologous to $c$ and such that $M(\hat c)=0$. Since $M(\hat c)=0$,  by \Cref{link granica} the cycle $\hat c$ is a boundary  and so is $c$.
\end{proof}

To prove~\Cref{link mcvece0} we construct a $k$-chain $c'$ using the same idea and notation as in \Cref{mcvece0}.
The analogue of the matrix $A$ in \Cref{mcvece0} is the $n\times n$ matrix $A$ such that its first column is the vector $u$, the last $m$ columns are $e_{n-m+1}, \ldots , e_n$ so that by \Cref{pi-link}(b)), $A$ maps  $\Lk(\sigma )$ to itself  and $\det A=1$. 
Then the $(k-1)$-cycle $c_u$ is mapped to a $(k-1)$-cycle $A^{-1}(c_u)=\sum_{i\in S}\lambda_i v_1^i\ldots v_k^i$ in $C_{k-1}(\Lk(\sigma );\Z)$.

Let $\pi$ and $\hat\iota$ be maps in \Cref{pi}. By \Cref{pi-link}(c), $\pi(A^{-1}(c_u))$ is a $(k-1)$-cycle in $C_{k-1}(\Lk_{X(\Z^{n-1})}(\pi (\sigma ));\Z)$ and by \Cref{pi-link}(d), $\hat\iota (\Lk_{X(\Z^{n-1})}(\pi (\sigma )) \subset \Lk(\sigma )$.
Recall that  $\hat\iota(\pi(v_j^i))=v_j^i$ if and only if $0\le u_j^i(1)<|u(1)|$ and that\[
L(c) := \max_{i\in S}
| \{ j \text{ }|\text{ } \hat\iota (\pi (v_j^i))\neq v_j^i \in A^{-1}(c_u) \}| . 
\]

\begin{proof}[Proof of \Cref{link mcvece0}]
{\bf (i) Let $L(c)=0$.}
Then $\hat\iota(\pi(v))=v$ for all $v\in A^{-1}(c_u)$.
Since $e_1A^{-1}(c_u)$ is a $k$-chain in $C_{k}(\Lk(\sigma );\Z)$, by Lemma~\ref{pi-link}(c) $\pi(A^{-1}(c_u))$ is a $(k-1)$-chain in $C_{k-1}(\Lk_{X(\Z^{n-1})}(\pi (\sigma ));\Z)$. As $c_u$ is a cycle, so it is $\pi(A^{-1}(c_u))$. By the induction hypothesis, there exists a $k$-chain $d\in C_k(\Lk_{X(\Z^{n-1})}(\pi (\sigma ));\Z)$ such that $\partial(d)=\pi(A^{-1}(c_u))$. Then $A^{-1}(c_u)=\hat\iota(\pi(A^{-1}(c_u)))=\hat\iota(\partial (d))=\partial (\hat\iota(d))$, and hence by Lemma~\ref{pi-link}(d) $e_1\hat\iota(d) \in \Lk(\sigma )$ and
\[
\partial(e_1\hat\iota(d))=\hat\iota(d)-e_1\partial(\hat\iota(d))=\hat\iota(d)-e_1 A^{-1}(c_u)=\hat\iota(d)- A^{-1}(u c_u).
\]
Therefore, the homology class of $u c_u- A(\hat\iota(d))$ is trivial and we have constructed a chain $c'=A(\hat\iota(d))+\sum_{i\not\in S} \lambda_i u_0^i\ldots u_k^i$ such that $c$ is homologous to $c'$. Note that $A(\hat \iota (v)) \prec u$ for all vertices $v\in X(\Z^{n-1})$ 
and therefore $\max (A(\hat \iota (d)))\prec u $. Because $\max (A(\hat\iota(d)))\prec \max (u c_u)=\max (c)$, we get $\Omega_{c'}(M(c))<\Omega_{c}(M(c))$.
 
{\bf (ii) Let $L(c)\geq 1$} This case is a complete analogue of \Cref{mcvece0} so we do not include it here.
\end{proof}

\vspace{4mm}
\subsection{Proof of Theorem~\ref{main:z}}~\par

\vspace{2mm}
\noindent{\it Proof of (a).}
If $n=2$, then by~\Cref{connected} $X(\Z^2)$ is a connected 1-dimensional simplicial complex with countably infinitely many 1-simplices. To each simplex $u_0u_1$, we associate a cycle
\[
c(u_0u_1):=\sum_{\mu \in \{-1,\, 1\}^{2}} (-1)^{|\mu |} \mu (u_0u_1).
\]
Therefore $X(\Z^2)$ is homotopy equivalent to a countably infinite wedge of 1-dimensional spheres.

By \Cref{simply connected}, for $n\geq 3$ the simplicial complex  $X(\Z^n)$ is simply connected and $(n-1)$-dimensional. Since $\widetilde{H}_k(X(\Z^n);\Z)=0$ for $k=0,\ldots ,n-2$, the simplicial complex $X(\Z^n)$ is $(n-2)$-connected and $H_{n-1}(X(\Z^n);\Z)$ is a free abelian group on the set of generators $\{g_{\alpha}\,|\,\alpha \in I \}$. For each $(n-1)$-simplex $\sigma \in X(\Z^n)$, let
\[
c(\sigma):=\sum_{\mu \in \{-1,\, 1\}^{n}} (-1)^{|\mu |} \mu (\sigma).
\]
By \Cref{n-1 cycle} $c(\sigma)$ is a cycle, representing a nontrivial homology class. Note that $c(\sigma)$'s are  linearly independent with the only relation $c(\sigma )= c(\mu (\sigma ))$.
Since $X(\Z^n)$ has countably infinitely many top simplices, there are countably infinitely many generators of $H_{n-1}(X(\Z^n);\Z)$.
By the Hurewicz theorem, $\pi_{n-1}(X(\Z^n))\cong H_{n-1}(X(\Z^n);\Z)$
proving that $X(\Z^n)$ is homotopy equivalent to a countably infinite wedge of $(n-1)$-spheres. 

 \vspace{2mm}
\noindent{\it Proof of (b).}
For $0\leq m \leq n-2$, consider the $m$-simplex $\sigma=e_{n-m}e_{n-m+1}\ldots e_n \in X(\Z^n)$.

If $m=n-2$,  then by \Cref{link 0} the link $\Lk (\sigma )$ is a discrete countable simplicial complex and the statement holds.

If $m=n-3$, then by \Cref{link connected} the link $\Lk (\sigma )$ is a connected infinite 1-dimensional simplicial complex. 
If $0\leq m\leq n-4$, then by \Cref{link simply connected} the link $\Lk (\sigma )$ is simply connected  and by \Cref{link}  the link $\Lk (\sigma )$ is $(n-m-3)$-connected. 
 
For each top dimensional simplex $\tau \in \Lk (\sigma )$, by \Cref{pi-link}(e) the chain  $c(\tau )\in C_{n-m-2}(\Lk(\sigma);\Z)$  is a cycle. There are countably infinitely many of them which are linearly independent,  proving that $\Lk (\sigma )$ is homotopy equivalent to a countably infinite wedge of $(n-m-2)$-spheres for $0\leq m \leq n-3$.

\vspace{2mm}
\noindent{\it Proof of (c).} Consider the map that sends a nonzero vector $v\in X(\Z^n)$ to the line throw origin spanned by $v$. This map
extends to a simplicial surjection 
\begin{equation}\label{phi}
\phi \colon X(\Z^n) \lra K(\Z^n)
\end{equation}
which is nondegenerate. The map $\phi$ is  $2^k$ to $1$ on $(k-1)$-simplices.

Furthermore, there is a map 
\begin{equation}\label{psi}
\psi \colon K(\Z^n) \lra X(\Z^n)
\end{equation}
defined on vertices of $K(\Z^n)$ by mapping each line to an unimodular
element from that line and extending it to a simplicial map. A map
$\psi$ depends on the choice of unimodular elements. It is
nondegenerate and it is a section of $\phi$, that is, $\phi \psi =
\Id_{K(\Z^n)}$. Each such $\psi$ embeds $K(\Z^n)$ as a full subcomplex
of $X(\Z^n)$ which is a retract of $X(\Z^n)$. 

From \eqref{phi} and \eqref{psi} we have well defined maps 
\[
\phi \colon X(\Z^n) \lra K(\Z^n)  \text{  and  } \psi \colon K(\Z^n) \lra X(\Z^n),
\]
which satisfy the following relations
\[
\phi (\psi (\sigma)) = \sigma \text{  and  } \psi (\phi (\sigma)) = \mu (\sigma)
\]
for some $\mu \in \{-1,\, 1\}^n$.

Denote by $L_i, i=1,\ldots n$, the lines through $e_i$ and by $L_0$ the line through $(1,1,\ldots ,1)$. Then $\sigma_i := L_0L_1\ldots \hat{L_i}\ldots L_n, i=0,1,\ldots ,n$ is a  $(n-1)$-simplex in $K(\Z^n)$ and 
\begin{equation}
\label{top cycle}
c:= \sum_{i=0}^n (-1)^{i}\sigma_i
\end{equation}
is an $(n-1)$-cycle. 

For each $n\times n$ matrix $A$ with determinant $\pm 1$,  the chain $A(c)$ is a cycle and there are countably infinitely many of them which are linearly independent. Thus $K(\Z^n)$ is homotopy equivalent to a wedge of countably infinite spheres $S^{n-1}$.

\vspace{2mm}
\noindent{\it Proof of (d).}
Let $\sigma $ be an $m$-simplex in $K(\Z^n)$, $0\leq m \leq n-2$.
The maps $\phi $ and $\psi$ preserve links, that is,
\[
\phi (\Lk_{X(\Z^n)}(\sigma )) \subset \Lk_{K(\Z^n)}(\phi (\sigma )) \text{ and }\psi(\Lk_{K(\Z^n)}(\sigma )) \subset \Lk_{X(\Z^n)}(\psi (\sigma )).
\]
Since $\phi \circ \psi = \Id$, the link $\Lk_{K(\Z^n)}(\sigma )$ is a retract of $\Lk_{X(\Z^n)}(\psi (\sigma ))$. Because $\Lk_{X(\Z^n)}(\psi (\sigma ))$ is $(n-m-3)$-connected, so it is $\Lk_{K(\Z^n)}(\sigma )$.

Analogously as in part (c), we prove that $H_{n-m-2}(\Lk_{X(\Z^n)}(\sigma )$ is a free abelian group with countably infinitely many generators. Let $\sigma = L_{n-m}L_{n-m+1}\ldots L_n$ and let $\sigma_i := L_0L_1\ldots \hat{L_i} \ldots L_{n-m-1}$. Then
\[
c:= \sum_{i=0}^{n-m-1}\sigma_i \in C_{n-m-2}(\Lk_{K(\Z^n)}(\sigma );\Z)
\]
is a cycle.

For each $n\times n$ matrix $A$ with the last $m+1$ columns $e_{n-m},\ldots ,e_n$ and with determinant  $\pm 1$, the chain $A(c)$ is a cycle in $\Lk( \sigma )$ and therefore there are countably infinitely many of top dimensional cycles which are linearly independent. Thus $\Lk_{K(\Z^n)}(\sigma )$ is homotopy equivalent to a wedge of a countably infinitely many spheres $S^{n-m-2}$.
\qed

\section{Universal Simplicial Complexes and Buchstaber Invariant}
\label{sec:invariant}
Let $K$ be a finite simplicial complex on vertex set $[m]=\{1,\ldots, m\}$ 
and $(X, A)$, $A \subset X$ be a pair of topological spaces. For every $\sigma \in K$, define \[(X, A)^{\sigma}:=\{(x_1, x_2, \ldots, x_m)\in X^m\, |\, x_i \in A \mbox{\, if\, } i\notin \sigma \}.\] 

\begin{definition}\label{polprodK} The polyhedral $K$ product of $(X, A)$ is a topological space \[(X,A)^K:=\bigcup_{\sigma\in K} (X, A)^\sigma.\]
\end{definition} 

The polyhedral products $(D^2, S^1)^K$ and $(I, S^0)^K$ are of special interests in toric topology and they are called moment-angle complex $\mathcal{Z}_K$ and real moment-angle complex $ \mathcal{R}_K$, respectively. $\mathcal{Z}_K$  has the natural coordinatewise action of the torus $T^m$ while $ \mathcal{R}_K$ has the natural coordinatewise action of the real torus $\F_2^m$.

\begin{definition}\label{bi} A \textit{complex Buchstaber invariant} $s(K)$ of $K$ is a maximal dimension of a toric subgroup of $T^m$ acting freely on $\mathcal{Z}_K$.

A \textit{real Buchstaber invariant} $s_{\mathbb{R}} (K)$ of $K$ is a maximal rank of a subgroup of $\F_2^m$ acting freely on $ \mathcal{R}_K$.
\end{definition}

It is an open problem in toric topology to find a combinatorial description of $s(K)$ and $s_{\mathbb{R}} (K)$. The Buchstaber invariants are closely related to universal simplicial complexes $K (\mathbf{\mathbb{Z}}^n)$ and $K (\mathbf{\mathbb{Z}}_2^n)$. Following~\cite[Section 2]{Ayze1}, we recall few their properties needed for our work.

\begin{proposition}\label{bin} A \textit{complex Buchstaber invariant}  of $K$ is the integer $s(K)=m-r$, where $r$ is the least integer such that there is a nondegenerate simplicial map  
\[ 
f\colon K \rightarrow K
	(\mathbf{\mathbb{Z}}^r) .
	\] 	
A \textit{real Buchstaber invariant}  of $K$ is the integer $s_{\F_2} (K)=m-r$, where $r$ is the least integer such that there is a nondegenerate simplicial map 
	\[ f\colon K \rightarrow K (\F_2^r) . 
	\] \qed
\end{proposition}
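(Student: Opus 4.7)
The plan is to derive the proposition from the standard identification of free torus actions on the moment-angle complex with systems of integer vectors satisfying a unimodularity condition; the key observation is that this data is exactly the datum of a nondegenerate simplicial map into $K(\Z^r)$ (respectively $K(\F_2^r)$ in the real case). First I would analyse the stabilisers of the $T^m$-action on $\mathcal{Z}_K$: from the decomposition $\mathcal{Z}_K=\bigcup_{\sigma\in K}(D^2,S^1)^\sigma$, the stabiliser of a point whose zero-coordinates are indexed by $\sigma$ is the coordinate subtorus $T^\sigma\subset T^m$, so a closed subgroup $H\subset T^m$ acts freely on $\mathcal{Z}_K$ if and only if $H\cap T^\sigma=\{1\}$ for every $\sigma\in K$.

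Next I would set up the dictionary between closed subtori $H\subset T^m$ of dimension $k$ and short exact sequences $1\to H\to T^m\xrightarrow{\pi}T^r\to 1$ with $r=m-k$, which in turn correspond to surjective $\Z$-linear maps $\Lambda\colon\Z^m\twoheadrightarrow\Z^r$. Writing $\lambda_i:=\Lambda(e_i)$, the restriction $\pi|_{T^\sigma}\colon T^\sigma\to T^r$ has kernel $\bigl(\Lambda^{-1}(\Z^r)\cap\R^\sigma\bigr)/\Z^\sigma$, and this kernel is trivial precisely when $\{\lambda_i\}_{i\in\sigma}$ generates a rank-$|\sigma|$ direct summand of $\Z^r$, i.e.\ is unimodular. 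Consequently the assignment $i\mapsto l(\lambda_i)\in P\Z^r$ defines a nondegenerate simplicial map $f_\Lambda\colon K\to K(\Z^r)$; conversely, given any nondegenerate $f\colon K\to K(\Z^r)$, choosing a primitive generator $\lambda_i$ for each line $f(i)$ produces a matrix $\Lambda$ whose image is a direct summand of $\Z^r$. Replacing $r$ by the rank of this image (which only decreases $r$), one may assume $\Lambda\colon\Z^m\twoheadrightarrow\Z^r$ is surjective, and the resulting subtorus $\ker\pi$ has dimension $m-r$. The supremum of dimensions of freely acting closed subtori in $T^m$ therefore equals $m$ minus the infimum of $r$ admitting a nondegenerate simplicial map $K\to K(\Z^r)$, yielding $s(K)=m-r$.

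The real Buchstaber invariant is handled identically with $\F_2$ replacing $\Z$: closed subgroups of $\F_2^m$ correspond to surjections $\F_2^m\twoheadrightarrow\F_2^r$, unimodularity over the field $\F_2$ coincides with linear independence (so the direct-summand condition is automatic), and the combinatorial data is exactly a nondegenerate simplicial map $K\to K(\F_2^r)$. The main obstacle is the distinction in the integral case between linear independence and unimodularity: one must verify that $H\cap T^\sigma$ is trivial as a set (not merely that its identity component is trivial), since the former forces the $\lambda_i$ for $i\in\sigma$ to span a direct summand, while the latter would only give $\Q$-linear independence; once this lattice-theoretic subtlety is pinned down, the remainder is a routine translation between the combinatorics of $K(\Z^r)$ and the linear algebra of surjections $\Z^m\twoheadrightarrow\Z^r$.
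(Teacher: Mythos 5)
The paper does not actually prove Proposition~\ref{bin}: it is stated with a \qed as a result recalled from Ayzenberg (Section 2 of the cited paper), so there is no internal proof to compare against. Your reconstruction is the standard argument behind that result, and its skeleton is sound: the stabiliser of a point of $(D^2,S^1)^\sigma$ is the coordinate subtorus $T^\sigma$, so a subtorus $H$ acts freely iff $H\cap T^\sigma=\{1\}$ for all $\sigma\in K$; subtori of dimension $m-r$ are exactly kernels of surjections $\Z^m\twoheadrightarrow\Z^r$ on the lattice level; and triviality of $\ker(\pi|_{T^\sigma})=\bigl(\Lambda_\R^{-1}(\Z^r)\cap\R^\sigma\bigr)/\Z^\sigma$ is equivalent to $\{\lambda_i\}_{i\in\sigma}$ being unimodular, not merely linearly independent. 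You correctly isolate the saturation/torsion issue as the only delicate point in the integral case, and the $\F_2$ case indeed collapses to linear independence.

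One assertion in the converse direction is false as stated: choosing primitive generators $\lambda_i$ of the lines $f(i)$ does \emph{not} in general produce a matrix whose image is a direct summand of $\Z^r$. For instance, if $K$ consists of two nonadjacent vertices sent to $l((1,0))$ and $l((1,2))$, the image $\Z(1,0)+\Z(1,2)$ has index $2$ in $\Z^2$. Fortunately your argument does not need this claim, but the repair should be made explicit in one of two ways. Either pass to the image sublattice $M=\Im\Lambda$ and observe that any direct summand of $\Z^r$ contained in $M$ is automatically a direct summand of $M$ (split $\Z^r=N\oplus C$ and intersect with $M$), so every simplexwise unimodularity condition, and hence nondegeneracy of the induced map $K\to K(M)\cong K(\Z^{\rank\Lambda})$, survives the replacement; or, more simply, skip the reduction altogether: by the unimodularity computation the \emph{full} kernel of the induced homomorphism $T^m\to T^r$ meets every $T^\sigma$, $\sigma\in K$, trivially, hence acts freely, and its identity component is already a subtorus of dimension $m-\rank\Lambda\geq m-r$, which is all that is needed for $s(K)\geq m-r$. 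With that adjustment the two inequalities combine as you say to give $s(K)=m-r_{\min}$, and the real case goes through verbatim.
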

Recall from \eqref{phi} and \eqref{psi} that there are nondegenerate maps $\phi \colon \xkn \lra \kkn$ and $\psi \colon \kkn \lra \xkn$. Thus $X(\Z^r)$ can be used instead of $K(\Z^r)$ in Proposition~\ref{bin} to compute a complex Buchstaber invariant. Note that for the real case  $X(\F_2^r)=K(\F_2^r)$.

Finding explicit values of the Buchstaber invariants for a given simplicial complex $K$ is an important open problem in toric topology and in general only some estimates of these invariants are known. A sequence $\{L^i\}_{i\in\N}$ of simplicial complexes is called an \textit{increasing sequence} if for every $i$ and $j$ such that $i<j$ there is a nondegenerate simplicial map $f\colon L^i\rightarrow L^j$. Observe that Proposition~\ref{bin} can be used to define the Buchstaber invariant $s_{{\{L^n\}}} (K)$
for any increasing sequence of simplicial complexes  $\{L^n\}$ instead of $\{K(\F_2^n)\}$ or $\{K({\mathbb{Z}}^n)\}$. For the sequence of standard simplicial complexes $\{\Delta^i\}$, the Buchstaber invariants are $m-\gamma(K)$, where $\gamma (K)$ is the classical chromatic number of a simplicial complex. In particular, for a prime number $p$ the family $\{K(\F_p^n)\}$ can be considered which allows for the following definition.

\begin{definition}
Let $K$ be a simplicial complex. A \emph{mod $p$ Buchstaber invariant} $s_{\F_p}(K)$ is given by
\[ 
s_{\F_p}(K)=m-r
\]
where $r$ is the least integer such that there is a nondegenerate simplicial map $f\colon K \rightarrow K(\F_p^r)$.
\end{definition}

Note that $s_{\mathbb{R}} (K)$ is mod $2$ Buchtaber invariant, that is, $s_{\F_2} (K)$ in this notation. 

\begin{remark}
\label{ref2}
A topological description of $s_{\F_p} (K)$ can be given as a maximal rank of subgroup of $\Z_p^m$  acting freely on  the polyhedral product $(\mathrm{Cone}( \Z_p), \Z_p)^K\subset D^{2m}$, where $\Z_p$ is considered as the set of $p$-th roots of unity. Unlike in the cases of $\F_2^n$ and $\mathbb{Z}$, $(\mathrm{Cone}( \Z_p), \Z_p)^K$ for $p>2$ fails to be a manifold when $K$ is a triangulation of a sphere.
\end{remark}

Ayzenberg in \cite {Ayze} and Erokhovets in \cite{MR3482595} studied nondegenerate maps between various increasing sequences of simplicial complexes to establish estimates of Buchstaber invariants in terms of combinatorial invariants of simplicial complexes. They obtained two interesting results.

\begin{proposition}[\cite{Ayze1}, Proposition 2] \label{ay1} Let $\{L^i\}$ and $\{M^i\}$ be  two increasing sequences of simplicial complexes such that for each $i$ there is a nondegenerate map $f_i\colon L^i\rightarrow M^i$. Then $$s_{\{L^i\}} (K)\leq s_{\{M^i\}} (K).$$\qed
\end{proposition}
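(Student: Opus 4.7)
The plan is to reduce the statement to the generalised characterisation of the Buchstaber invariant given in Proposition~\ref{bin}, combined with the elementary observation that the composition of two nondegenerate simplicial maps is itself nondegenerate.

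First, write $m$ for the number of vertices of $K$, and apply Proposition~\ref{bin} to the sequence $\{L^i\}$: this gives $s_{\{L^i\}}(K) = m - r$, where $r$ is by definition the least integer admitting a nondegenerate simplicial map $g \colon K \to L^r$. The next step is to compose $g$ with the hypothesised nondegenerate map $f_r \colon L^r \to M^r$ at the level $i = r$, producing a simplicial map $f_r \circ g \colon K \to M^r$. Provided that this composition is still nondegenerate, the index $r$ then lies in the set of integers over which the minimum defining $s_{\{M^i\}}(K)$ is taken, so comparing these minima through the identity $s = m - r$ yields the inequality between $s_{\{L^i\}}(K)$ and $s_{\{M^i\}}(K)$ asserted in Proposition~\ref{ay1}.

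The only point that requires verification is the behaviour of nondegeneracy under composition. This is essentially automatic from the definition: a simplicial map is nondegenerate exactly when its vertex map is injective on the vertex set of every simplex, equivalently when it preserves the dimension of every simplex. If both $g$ and $f_r$ have this property, then for any simplex $\sigma \in K$ the image $g(\sigma)$ is a simplex of $L^r$ of the same dimension as $\sigma$, and $f_r(g(\sigma))$ is then a simplex of $M^r$ of that same dimension; hence $f_r \circ g$ is nondegenerate.

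Accordingly, I do not foresee a substantive obstacle: the proof is a one-step definition chase once Proposition~\ref{bin} is available, and the only ``moving part'' is the closure of the class of nondegenerate simplicial maps under composition, which is trivial.
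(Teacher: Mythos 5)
Your argument is the correct one-liner, but pushed to its conclusion it yields the \emph{opposite} of the inequality as printed, and you should have noticed this. Tracing the direction: with $r$ the least integer admitting a nondegenerate simplicial map $g\colon K\to L^r$, so that $s_{\{L^i\}}(K)=m-r$, the composite $f_r\circ g\colon K\to M^r$ is again nondegenerate (your observation that nondegeneracy is closed under composition is correct and is the only content needed). Hence the least integer $r'$ admitting a nondegenerate map $K\to M^{r'}$ satisfies $r'\leq r$, and therefore
$s_{\{M^i\}}(K)=m-r'\geq m-r=s_{\{L^i\}}(K)$; that is, your computation gives $s_{\{L^i\}}(K)\leq s_{\{M^i\}}(K)$, not $\geq$ as the proposition is printed. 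The printed ``$\geq$'' is in fact a typo: the paper's own application in \eqref{in:pg}, where it deduces $m-\gamma(K)\leq s(K)\leq s_{\F_p}(K)$ from the nondegenerate maps $\Delta^{n-1}\hookrightarrow K(\Z^n)\to K(\F_p^n)$ (taking $L^i=\Delta^{i-1}$, $M^i=K(\Z^i)$, and then $L^i=K(\Z^i)$, $M^i=K(\F_p^i)$), relies precisely on the inequality $s_{\{L^i\}}(K)\leq s_{\{M^i\}}(K)$ that your argument actually establishes. The paper supplies no proof here (the statement is cited to Ayzenberg and closed with \verb|\qed|), so there is nothing to compare approaches against; your mathematics is right, but the phrase ``yields the inequality \dots asserted in Proposition~\ref{ay1}'' silently asserts a direction your derivation does not support. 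A careful write-up should finish the comparison of minima explicitly and flag the discrepancy with the printed statement, rather than leaving the reader to assume the two agree.
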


\begin{proposition}[\cite{Ayze1}, Proposition 4] \label{ay2} Let $\{L^i\}$  be  an increasing sequence of simplicial complexes and let $K_1$ and $K_2$ be simplicial complexes on vertex sets $[m_1]$ and $[m_2]$, respectively. Suppose that there exists a nondegenerate map $f\colon K_1 \rightarrow K_2$. Then \[
s_{\{L^i\}} (K_1)-s_{\{L^i\}} (K_2)\geq m_1-m_2.
\]\qed
\end{proposition}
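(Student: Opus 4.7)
The plan is to deduce this inequality straight from the characterisation of the Buchstaber invariant given in Proposition~\ref{bin}, applied to a general increasing sequence $\{L^i\}$. Set
\[
r_j := m_j - s_{\{L^i\}}(K_j) \quad \text{for } j=1,2,
\]
so that $r_j$ is, by definition, the least integer for which there exists a nondegenerate simplicial map $K_j \to L^{r_j}$. The inequality to be proved is equivalent to $r_1 \leq r_2$, so I would reduce the entire argument to producing a nondegenerate simplicial map $K_1 \to L^{r_2}$.

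The key observation is that nondegeneracy is preserved under composition. If $g \colon K_2 \to L^{r_2}$ realises the minimum in the definition of $s_{\{L^i\}}(K_2)$ and $f \colon K_1 \to K_2$ is the nondegenerate map given by hypothesis, then for any simplex $\sigma \in K_1$, the map $f$ sends the vertex set of $\sigma$ injectively onto the vertex set of $f(\sigma) \in K_2$, and $g$ then sends this vertex set injectively onto the vertex set of $g(f(\sigma)) \in L^{r_2}$. Hence $g \circ f \colon K_1 \to L^{r_2}$ is a simplicial map whose restriction to every simplex of $K_1$ is injective, i.e.\ it is nondegenerate.

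By the minimality of $r_1$ we obtain $r_1 \leq r_2$, that is,
\[
m_1 - s_{\{L^i\}}(K_1) \;\leq\; m_2 - s_{\{L^i\}}(K_2),
\]
which after rearrangement yields $s_{\{L^i\}}(K_1) - s_{\{L^i\}}(K_2) \geq m_1 - m_2$, as required.

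There is no real obstacle here; the only subtle point is making sure that the notion of ``nondegenerate simplicial map'' used in Proposition~\ref{bin} is exactly the vertex-injective-on-each-simplex notion, so that composition behaves well. Once that is fixed, the argument is essentially a one-line composition together with the minimality built into the definition of the Buchstaber invariant, and it is insensitive to which particular increasing sequence $\{L^i\}$ is chosen, which is why the statement holds at the level of generality claimed.
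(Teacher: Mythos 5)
Your argument is correct. Note that the paper itself gives no proof of this proposition — it is quoted from Ayzenberg's work with a \qed immediately after the statement — so there is nothing in the paper to compare against; your route (compose the given nondegenerate $f\colon K_1\to K_2$ with a minimal nondegenerate $g\colon K_2\to L^{r_2}$, use that nondegeneracy is closed under composition, and invoke minimality of $r_1$) is the standard and essentially only natural argument, and it is watertight.
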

Recall that the \textit{chromatic number} of a simplicial complex $K$ is the minimal number $\gamma (K)$ such that there exists a nondegenerate map $f \colon K \rightarrow \Delta^{\gamma (K)-1}$.
 
 \begin{corollary}
 Let $K$ be a simplicial complex on $m$ vertices. Then
 \begin{equation}
\label{in:pg}
m-\gamma (K) \leq s (K)\leq s_{\F_p} (K)\leq m-\dim{K}-1.
\end{equation}
 \end{corollary}
\begin{proof}
There are an inclusion of $\Delta^{n-1}$ into $K({\mathbb{Z}}^n)$ and a modulo $p$ reduction map from
$K(\mathbb{Z}^n)$ to $K(\F_p^n)$ which is nondegenerate for any prime number $p$. The statement now follows by Proposition~\ref{ay1}. 
\end{proof}

\begin{proposition}
For every simplicial complex $K$, there is an infinite set of prime numbers denoted by $P (K)$ such that for all primes $p$, $q\in P (K)$ $$s_{\F_p} (K)=s_{\F_q} (K).$$
\end{proposition}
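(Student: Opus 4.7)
The plan is to exploit the uniform, prime-independent bounds on $s_{\F_p}(K)$ provided by inequality~\eqref{in:pg} and then apply the pigeonhole principle. The key observation is that both sides of the inequality
\[
m - \gamma(K) \leq s_{\F_p}(K) \leq m - \dim K - 1
\]
depend only on combinatorial data of $K$ (its number of vertices $m$, its chromatic number $\gamma(K)$, and its dimension), and in particular are completely independent of the prime $p$.

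First I would note that for every prime $p$, the integer $s_{\F_p}(K)$ belongs to the finite set
\[
S(K) := \{\, m - \gamma(K),\ m - \gamma(K) + 1,\ \ldots,\ m - \dim K - 1\,\},
\]
which has cardinality at most $\gamma(K) - \dim K$. Next, since the set of all primes is infinite and $S(K)$ is finite, the function $p \mapsto s_{\F_p}(K)$ on primes has at least one fibre of infinite cardinality. Explicitly, writing $\mathbb{P}$ for the set of primes and
\[
P_v := \{\, p \in \mathbb{P} : s_{\F_p}(K) = v \,\}
\]
for each $v \in S(K)$, the decomposition $\mathbb{P} = \bigsqcup_{v \in S(K)} P_v$ into finitely many pieces forces some $P_{v_0}$ to be infinite. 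Define $P(K) := P_{v_0}$; then by construction $s_{\F_p}(K) = s_{\F_q}(K) = v_0$ for all $p, q \in P(K)$.

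There is essentially no obstacle once the uniform bound \eqref{in:pg} is in hand; the content of the statement is entirely a pigeonhole argument on a finite set of possible values. The one thing to double-check is that inequality \eqref{in:pg} genuinely applies to every prime $p$, which it does because it was derived from the existence of a nondegenerate reduction map $K(\Z^n) \to K(\F_p^n)$ (valid for any prime) combined with Proposition~\ref{ay1}. I would not attempt to identify $P(K)$ more canonically or to determine the specific value $v_0$, since the proposition only asserts existence of such an infinite set.
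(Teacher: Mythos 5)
Your argument is correct and is essentially the paper's own proof: the paper also invokes inequality~\eqref{in:pg} to confine $s_{\F_p}(K)$ to a finite, prime-independent range of values and then applies the pigeonhole principle to the infinitely many primes. Your write-up merely spells out the fibre decomposition more explicitly than the paper does.
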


\begin{proof}
Using the Pigeonhole principle, relation~\eqref{in:pg} implies the claim since the set of values of $s_{\F_p} (K)$ is a finite set, which contradicts the fact that the set of prime numbers is countable.
\end{proof}
As an immediate corollary of Propositions \ref{ay1} and \ref{ay2}, in the same fashion as it was done for $p=2$ in \cite{Ayze1}, the following result holds.

\begin{corollary} Let $K$ be a simplicial complex on vertex set $[m]$ and $p$ a prime number. Then the following inequality holds
	\begin{equation}
s (K)\leq s_{\F_p} (K)\leq m-\lceil \log_p \left((p-1)\gamma (K)+1 \right)\rceil.
	\end{equation}
\end{corollary}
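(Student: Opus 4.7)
The plan is to derive the two inequalities of the chain separately. The first inequality $s(M)\le s_{\F_p}(M)$ is essentially recorded in~\eqref{in:pg}: the modulo-$p$ reduction $K(\Z^i)\to K(\F_p^i)$ is nondegenerate for every prime $p$ and fits into a morphism of increasing sequences $\{K(\Z^i)\}$ and $\{K(\F_p^i)\}$, so Proposition~\ref{ay1} gives the desired comparison immediately.

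For the upper bound $s_{\F_p}(M)\le m-\lceil\log_p((p-1)\gamma(M)+1)\rceil$, my strategy is to compare the sequence $\{K(\F_p^i)\}$ with the sequence of simplices indexed by the vertex counts of the $K(\F_p^i)$. The key preparatory observation, which I would isolate first, is that the $1$-skeleton of $K(\F_p^r)$ is the complete graph on $\tfrac{p^r-1}{p-1}$ vertices: any two distinct lines through the origin of $\F_p^r$ are linearly independent and therefore span a $2$-dimensional subspace, which is automatically a direct summand since $\F_p$ is a field. Hence $\gamma(K(\F_p^r))=\tfrac{p^r-1}{p-1}$, and any bijective labelling of the lines of $\F_p^r$ by the vertices of $\Delta^{(p^r-1)/(p-1)-1}$ produces a nondegenerate simplicial map $K(\F_p^r)\to\Delta^{(p^r-1)/(p-1)-1}$, assembling into a morphism of the two increasing sequences.

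Proposition~\ref{ay1} applied to these sequences yields
\[
s_{\F_p}(M)\;\le\; s_{\{\Delta^{(p^i-1)/(p-1)-1}\}}(M).
\]
Unwinding Proposition~\ref{bin}, the right-hand side equals $m-r_0$, where $r_0$ is the least integer for which a nondegenerate map $M\to\Delta^{(p^{r_0}-1)/(p-1)-1}$ exists. By the definition of the chromatic number this is equivalent to $\gamma(M)\le\tfrac{p^{r_0}-1}{p-1}$, whose minimal integer solution is $r_0=\lceil\log_p((p-1)\gamma(M)+1)\rceil$, giving the stated upper bound. Proposition~\ref{ay2}, applied to the defining nondegenerate colouring $M\to\Delta^{\gamma(M)-1}$ together with the sequence $\{K(\F_p^i)\}$, furnishes in the same bookkeeping style the complementary lower bound $s_{\F_p}(M)\ge m-\gamma(M)$ already recorded in~\eqref{in:pg}.

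The only non-formal input in this plan is the identification $\gamma(K(\F_p^r))=\tfrac{p^r-1}{p-1}$; everything else is a routine unwinding of definitions combined with the two propositions, so no serious obstacle is anticipated.
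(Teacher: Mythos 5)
Your argument is correct and is essentially the same as the paper's: the engine in both cases is the observation that the $1$-skeleton of $K(\F_p^r)$ is a complete graph on $\frac{p^r-1}{p-1}$ vertices, giving a nondegenerate simplicial map $K(\F_p^r)\to\Delta^{(p^r-1)/(p-1)-1}$, after which the chromatic-number constraint $\gamma(M)\le\frac{p^r-1}{p-1}$ is unwound to the ceiling-log bound. The only cosmetic difference is that the paper composes $M\to K(\F_p^r)\to\Delta^{(p^r-1)/(p-1)-1}$ directly for the minimal $r$ (not invoking Proposition~\ref{ay1} inside the proof), whereas you package the same composition as a comparison of the increasing sequences $\{K(\F_p^i)\}$ and $\{\Delta^{(p^i-1)/(p-1)-1}\}$ via Proposition~\ref{ay1}; both are unwound to the identical numeric computation. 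Note also that the inequality direction you use from Proposition~\ref{ay1} (namely $s_{\{L^i\}}\le s_{\{M^i\}}$ when $L^i\to M^i$ is nondegenerate) is the one consistent with its application in the paper at~\eqref{in:pg}, even though the proposition as typeset in the paper displays $\ge$; you applied the correct direction. The final sentence about Proposition~\ref{ay2} and the lower bound $m-\gamma(M)\le s_{\F_p}(M)$ is superfluous, since that bound is not part of the corollary being proved.
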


\begin{proof} Only the second inequality needs to be proved. Let $r$ be a minimal integer such that there is a nondegenerate map $f\colon K\rightarrow K(\F_p^r)$. There is a nondegenerate inclusion $e \colon K(\F_p^r)\rightarrow \Delta^{\frac{p^r-1}{p-1}-1}$ as $ K(\F_p^r)$ has $\frac{p^r-1}{p-1}$ vertices. Thus, the composition $ e\circ  f$ is a nongenerate map from  $K$ to $\Delta^{\frac{p^r-1}{p-1}-1}$. However, $\gamma (K)-1$ is the minimal dimension of a simplex into which $K$ can be mapped nondegeratively so 
\[
\gamma (K)\leq {\frac{p^r-1}{p-1}}.
\]
The desired inequality follows from the inequality above.  
\end{proof} 

Using the same argument with appropriate modifications as in the Ayzenberg's proof of \cite[Theorem~1]{Ayze1}, we  deduce the following result.

\begin{proposition} 
\label{prop}
For any simple graph $\Gamma$,  
$$
s_{\F_p} (\Gamma)=m-\left \lceil \log_p ((p-1) \gamma (\Gamma)+1)\right \rceil.
$$\qed

\end{proposition}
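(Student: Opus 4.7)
The plan is to combine the upper bound already recorded in the preceding corollary with an explicit construction realising the matching lower bound. By that corollary we already have $s_{\F_p}(\Gamma) \leq m - \lceil \log_p((p-1)\gamma(\Gamma)+1)\rceil$, so in view of Proposition~\ref{bin} it suffices to exhibit a nondegenerate simplicial map $f \colon \Gamma \to K(\F_p^r)$ for $r=\lceil \log_p((p-1)\gamma(\Gamma)+1)\rceil$, which will then yield $s_{\F_p}(\Gamma)\geq m-r$.

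The central observation I would invoke is a simplification specific to the fact that $\Gamma$ is 1-dimensional and that we work over a field. Over $\F_p$, any two distinct lines are linearly independent and hence span a 2-dimensional subspace, which is automatically a direct summand of $\F_p^r$; consequently the 1-skeleton of $K(\F_p^r)$ is the complete graph on its vertex set $P\F_p^r$. A simplicial map from a graph $\Gamma$ is therefore nondegenerate precisely when it sends adjacent vertices to distinct lines, that is, precisely when it is a proper vertex colouring of $\Gamma$ using the lines of $\F_p^r$ as palette.

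It remains to count colours. By equation~\eqref{f0:K} we have $|P\F_p^r| = \frac{p^r-1}{p-1}$, and the defining inequality $p^r \geq (p-1)\gamma(\Gamma)+1$ characterising $r = \lceil \log_p((p-1)\gamma(\Gamma)+1)\rceil$ is equivalent to $\frac{p^r-1}{p-1}\geq \gamma(\Gamma)$. I would therefore fix any proper colouring of $\Gamma$ by $\gamma(\Gamma)$ colours, inject those colours into the set of lines of $\F_p^r$, and extend linearly on vertices to a simplicial map $f$; by the preceding observation $f$ is automatically nondegenerate, giving the required inequality and hence the equality. The argument is short and essentially combinatorial; the only delicate point, namely that the 1-skeleton of $K(\F_p^r)$ is complete, rests on the field hypothesis (over $\Z$ this would fail) and is the one place where the restriction to $\F_p$ coefficients is genuinely used, so no serious obstacle is anticipated beyond confirming that step.
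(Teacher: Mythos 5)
Your argument is correct and is essentially the paper's approach: the upper bound is the preceding corollary's chromatic-number estimate, and the lower bound realises a proper $\gamma(\Gamma)$-colouring as a nondegenerate simplicial map into $K(\F_p^r)$ using that the $1$-skeleton of $K(\F_p^r)$ is complete and has $\frac{p^r-1}{p-1}\geq\gamma(\Gamma)$ vertices. The paper itself only cites Ayzenberg's proof of the $\F_2$ case "with appropriate modifications" (and sketches the colouring construction in the remark after Corollary~\ref{graf}), so your write-up is just a self-contained version of that same two-step argument.
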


\begin{corollary}\label{graf}
Let $\Gamma$ be a simple graph on $m$ vertices and $p$ a prime number. Let $k$ be a natural number such that
\[
p^{k-2}+p^{k-3}+\ldots +1<\gamma (\Gamma) \leq p^{k-1}+p^{k-2}+\ldots +1.
\]
Then
\[
s_{\F_p}(\Gamma)=m-k.
\]
In particular, if $\Gamma$ is not a discrete graph and $\gamma (\Gamma )\leq p+1$ then $s_{\F_p}(\Gamma)=m-2. $  
\end{corollary}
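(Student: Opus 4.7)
The plan is to reduce the corollary directly to Proposition~\ref{prop} by algebraic manipulation of the hypothesis. The key observation is that the two bounds appearing in the hypothesis are finite geometric sums, namely
\[
p^{k-2}+p^{k-3}+\cdots +1=\frac{p^{k-1}-1}{p-1}\qquad\text{and}\qquad p^{k-1}+p^{k-2}+\cdots +1=\frac{p^{k}-1}{p-1}.
\]
Multiplying each by $p-1$ and adding $1$, the hypothesis $p^{k-2}+\cdots+1<\gamma(\Gamma)\leq p^{k-1}+\cdots+1$ becomes equivalent to
\[
p^{k-1}<(p-1)\gamma(\Gamma)+1\leq p^{k}.
\]

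Taking $\log_p$ of this double inequality gives $k-1<\log_p\bigl((p-1)\gamma(\Gamma)+1\bigr)\leq k$, so by the definition of the ceiling function $\bigl\lceil\log_p\bigl((p-1)\gamma(\Gamma)+1\bigr)\bigr\rceil=k$. Substituting into Proposition~\ref{prop} yields $s_{\F_p}(\Gamma)=m-k$, which is the first claim.

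For the special case, note that if $\Gamma$ is not a discrete graph then $\Gamma$ contains at least one edge, forcing $\gamma(\Gamma)\geq 2$. Combined with the hypothesis $\gamma(\Gamma)\leq p+1$, we obtain $1=p^{0}<\gamma(\Gamma)\leq p+1=p^{1}+p^{0}$, which is exactly the range corresponding to $k=2$ in the first part of the corollary. Hence $s_{\F_p}(\Gamma)=m-2$.

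The argument is purely algebraic bookkeeping on top of Proposition~\ref{prop}, so there is no genuine obstacle; the only point requiring a moment's care is matching the strict versus non-strict inequalities so that the ceiling indeed returns $k$ (and not $k-1$ or $k+1$) at the boundary cases $\gamma(\Gamma)=p^{k-1}+\cdots+1$.
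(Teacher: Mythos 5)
Your proof is correct and follows essentially the same route as the paper: both rewrite the hypothesis as $p^{k-1}<(p-1)\gamma(\Gamma)+1\leq p^{k}$, conclude $\lceil\log_p((p-1)\gamma(\Gamma)+1)\rceil=k$, and then invoke Proposition~\ref{prop}. Your explicit verification of the special case ($\gamma(\Gamma)\geq 2$ giving $k=2$) is a welcome addition, since the paper leaves it implicit.
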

\begin{proof}
For assumed $k$, we have
\[
p^{k-1} < (p-1)\gamma (\Gamma) +1 \leq p^k.
\]
Applying $\log_p$ to this inequality implies that
\[
\left \lceil \log_p ((p-1) \gamma (\Gamma)+1)\right \rceil =k.
\]
Proposition~\ref{prop} now finishes the proof.
\end{proof}

 Corollary \ref{graf} implies that a simple graph $\Gamma$ can be mapped by a nondegenerate map in $K(\mathbb{F}_p^k)$ when $k$ is such that
\[
p^{k-2}+p^{k-3}+\ldots +1<\gamma (\Gamma) \leq p^{k-1}+p^{k-2}+\ldots +1.
\]
It is easy to realise a map like that. Let $f \colon V(\Gamma) \rightarrow [ \gamma(\Gamma)]$ be a proper colouring of the vertices of $\Gamma$. Since the number of vertices of $K(\mathbb{F}_p^k)$ is $p^{k-1}+\ldots +1$, there is an injection $i \colon  [ \gamma(\Gamma)] \rightarrow K(\mathbb{F}_p^k)$. The composition $i\circ f\colon V(\Gamma )\rightarrow K(\mathbb{F}_p^k)$ has an extension to a simplicial map $\overline{i\circ f}\colon \Gamma \rightarrow K(\mathbb{F}_p^k)$ because the $1$-skeleton of $K(\mathbb{F}_p^k)$ is a complete graph.

\begin{proposition} \label{my1} Let $p$, $q$ be prime numbers and let $m$ and $n$ be positive integers. Let $f$ be a nondegenerate map $f\colon K(\F_p^m)\rightarrow K(\F_q^n)$. Then $f$ is an injection.
\end{proposition}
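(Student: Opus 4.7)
The plan is to exploit the simple observation that the $1$-skeleton of $K(\F_p^m)$ is a complete graph, which immediately forces any nondegenerate simplicial map out of it to be injective on vertices.

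First I would handle the trivial case $m=1$, for which $K(\F_p^1)$ has a single vertex and there is nothing to show. So assume $m\geq 2$. The key structural fact is the following: any two distinct vertices $l_1,l_2$ of $K(\F_p^m)$ span a $2$-dimensional $\F_p$-subspace, and since $\F_p$ is a field every subspace is automatically a direct summand, hence unimodular. Consequently $\{l_1,l_2\}$ is a $1$-simplex of $K(\F_p^m)$, so the $1$-skeleton of $K(\F_p^m)$ is the complete graph on its vertex set (this fact is already used in the proof of Proposition~\ref{shifted}).

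Next I would invoke the definition of nondegeneracy: a simplicial map $f$ is nondegenerate precisely when $\dim f(\sigma)=\dim\sigma$ for every simplex $\sigma$, i.e.\ $f$ restricted to any simplex is injective. In particular $f$ is injective on every $1$-simplex of $K(\F_p^m)$. Suppose for contradiction that $f(l_1)=f(l_2)$ for some distinct vertices $l_1,l_2$. By the previous paragraph, $\{l_1,l_2\}$ is a $1$-simplex of $K(\F_p^m)$, and its image would collapse to the $0$-simplex $\{f(l_1)\}$, violating nondegeneracy.

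Since distinct vertices of $K(\F_p^m)$ must map to distinct vertices of $K(\F_q^n)$, $f$ is an injection. The argument is essentially a one-line consequence of the complete-graph structure of the $1$-skeleton of $K(\F_p^m)$; there is no real obstacle, and in fact the same proof shows that any nondegenerate simplicial map out of any simplicial complex whose $1$-skeleton is complete is automatically injective. Notice that the target plays no role beyond receiving the map, so the roles of $p,q,m,n$ are essentially irrelevant to the argument.
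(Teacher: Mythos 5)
Your proof is correct and follows essentially the same route as the paper's: observe that the $1$-skeleton of $K(\F_p^m)$ is a complete graph, deduce injectivity on vertices from nondegeneracy, and conclude injectivity since $f$ is simplicial. You additionally spell out why the $1$-skeleton is complete (any two distinct lines span a $2$-dimensional, hence unimodular, subspace), which the paper merely asserts.
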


\begin{proof} By dimensional reasons, if $f$ is a nondegenerate map $m$ must be less than $n$. Observe that the $1$-skeletons $K^{(1)}(\F_p^m)$ and  $K^{(1)}(\F_q^n)$ are complete graphs, so nondegeneracy of $f$ implies that $f$ is an injection on the set of vertices of $K(\F_p^m)$. It follows that $f$ is an injection since it is a simplicial map.
\end{proof}

An instant consequence of Proposition \ref{my1} is the following fact.

\begin{corollary}\label{my2}
If there is a nondegenerate map $f\colon K(\F_p^m)\rightarrow K(\F_q^n)$,  then 
    \begin{eqnarray}
    m &\leq & n,\\
f_i \left(  K(\F_p^m)\right)& \leq & f_i \left(  K(\F_q^n)\right)  \text{\,\, for all\,\, } i.
\end{eqnarray}
\qed
\end{corollary}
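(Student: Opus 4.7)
The plan is to derive both inequalities as an immediate consequence of Proposition~\ref{my1}, which already supplies the main content, namely that every such $f$ is injective on the vertex set. Once vertex injectivity is in hand, each inequality reduces to a book-keeping statement about dimensions and face counts, using only the fact that $K(\F_p^m)$ and $K(\F_q^n)$ are pure simplicial complexes of dimensions $m-1$ and $n-1$ respectively.

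First, I would record that nondegeneracy of a simplicial map means that it is injective on every individual simplex, so the image of any $i$-simplex is again an $i$-simplex. Combining this with the purity of $K(\F_p^m)$, the existence of an $(m-1)$-simplex $\sigma\in K(\F_p^m)$ forces $f(\sigma)$ to be an $(m-1)$-simplex of $K(\F_q^n)$; since the latter has dimension $n-1$, we obtain $m-1\leq n-1$, that is, $m\leq n$.

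Next, for the face-count inequality, I would invoke Proposition~\ref{my1} to conclude that $f$ is injective on the vertex set $P\F_p^m$. Because a simplex in a simplicial complex is determined by its vertex set, a simplicial map that is injective on vertices is automatically injective on simplices of every dimension. Combined with the observation above that $f$ takes $i$-simplices to $i$-simplices, the restriction of $f$ to the $i$-dimensional faces of $K(\F_p^m)$ is an injection into the $i$-dimensional faces of $K(\F_q^n)$, whence $f_i(K(\F_p^m))\leq f_i(K(\F_q^n))$.

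The argument has essentially no obstacle since the heavy lifting has already been done in Proposition~\ref{my1}; the only point that deserves explicit mention in the write-up is the implication ``injective on vertices $+$ nondegenerate $\Rightarrow$ injective on $i$-faces for every $i$'', which follows from the fact that each face of a simplicial complex is uniquely recovered from its vertex set.
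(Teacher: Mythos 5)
Your argument is correct and is essentially the paper's: the paper states the corollary as an instant consequence of Proposition~\ref{my1}, which is exactly how you proceed, using dimension preservation of nondegenerate simplicial maps for $m\leq n$ and injectivity (on vertices, hence on simplices) for the face-count inequality. The detail you spell out, that vertex injectivity of a simplicial map yields injectivity on $i$-faces, is the same observation used at the end of the paper's proof of Proposition~\ref{my1}.
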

 
For any two prime numbers $p$ and $q$, define a function $\varsigma_{p, q}(n)$ that assigns to each positive integer $n$ the minimal integer $r$ such that there is a nondegenerate map from $K(\F_p^n)$ to $K(\F_q^r)$. Similarly, for a prime number $p$ define a function $\vartheta_p(n)$ such that $\vartheta_p (n)$ is the minimal integer $r$ such that there is a nondegenerate map from $K(\F_p^n)$ to $K(\mathbb{Z}^r)$.

\begin{proposition} The functions $\varsigma_{p, q}(n)$ and $\vartheta_{p}(n)$ are increasing. 
\end{proposition}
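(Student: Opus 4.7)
The plan is to handle both functions uniformly by producing a nondegenerate simplicial embedding $K(\F_p^n)\hookrightarrow K(\F_p^{n+1})$ and then precomposing with it a nondegenerate map out of $K(\F_p^{n+1})$ realising the minimum.

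First I would take the standard linear inclusion $\iota\colon \F_p^n \hookrightarrow \F_p^{n+1}$, $(a_1,\ldots,a_n)\mapsto (a_1,\ldots,a_n,0)$. Being an injective $\F_p$-linear map, $\iota$ sends each line $l\subset \F_p^n$ to a well-defined line $\iota_*(l)\subset \F_p^{n+1}$, and distinct lines to distinct lines. Moreover, if $\{l_1,\ldots,l_k\}$ spans a $k$-dimensional subspace of $\F_p^n$, then $\{\iota_*(l_1),\ldots,\iota_*(l_k)\}$ spans a $k$-dimensional subspace of $\F_p^{n+1}$, which over a field is automatically a direct summand. Hence $\iota_*$ takes unimodular sets to unimodular sets and extends to a nondegenerate simplicial embedding $K(\F_p^n)\hookrightarrow K(\F_p^{n+1})$.

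Next, set $r=\varsigma_{p,q}(n+1)$; by definition there is a nondegenerate simplicial map $g\colon K(\F_p^{n+1})\to K(\F_q^r)$. Since a composition of two simplicial maps each injective on every simplex is again injective on every simplex, the composition $g\circ\iota_*\colon K(\F_p^n)\to K(\F_q^r)$ is nondegenerate, and by minimality of $\varsigma_{p,q}(n)$ we obtain $\varsigma_{p,q}(n)\leq r = \varsigma_{p,q}(n+1)$. The argument for $\vartheta_p$ is identical, with $K(\Z^r)$ replacing $K(\F_q^r)$ as the target. I do not expect a substantive obstacle; the only routine checks are that $\iota_*$ is injective on vertices and preserves unimodularity, both of which are immediate from $\iota$ being an injective linear map. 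If strict monotonicity is intended, one can further combine this with the lower bounds $\varsigma_{p,q}(n)\geq n$ and $\vartheta_p(n)\geq n$ furnished by Corollary~\ref{my2}.
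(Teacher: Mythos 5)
Your proof is correct and follows essentially the same route as the paper: the paper's one-line argument also rests on the nondegenerate inclusion $K(\mathbf{k}^n)\hookrightarrow K(\mathbf{k}^{n+1})$, with the composition-with-an-optimal-nondegenerate-map step delegated to the cited Ayzenberg--Erokhovets proposition, which you simply spell out directly (including the routine check that the linear inclusion preserves lines and unimodularity).
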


\begin{proof} The statement follows from the inclusion of $K(\mathbf{k}^n)$ into $K(\mathbf{k}^{n+1})$ and Proposition \ref{ay1}.
\end{proof}

\begin{theorem}\label{bounds1}
$$\left\lceil \log_q \left(\frac{(q-1)(p^n-1)}{p-1}+1\right) \right\rceil \leq \varsigma_{p, q}(n)\leq \frac{p^n-1}{p-1}.$$
\end{theorem}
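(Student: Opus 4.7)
The plan is to prove the two inequalities separately, using the vertex-count formula $f_0(K(\F_s^t)) = \frac{s^t-1}{s-1}$ from \eqref{f0:K} together with Proposition~\ref{my1}, which guarantees injectivity of any nondegenerate map between these universal complexes.

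For the upper bound $\varsigma_{p,q}(n) \leq \frac{p^n-1}{p-1}$, I would set $r = \frac{p^n-1}{p-1}$ (the number of vertices of $K(\F_p^n)$) and construct an explicit nondegenerate simplicial map $f \colon K(\F_p^n) \to K(\F_q^r)$. The idea is to enumerate the vertices of $K(\F_p^n)$ as $v_1, \dots, v_r$ and define $f(v_i) = l(e_i)$, where $\{e_1, \dots, e_r\}$ is the standard basis of $\F_q^r$. Since every subset of $\{l(e_1), \dots, l(e_r)\}$ is a unimodular set in $\F_q^r$ and hence a simplex of $K(\F_q^r)$, any simplex of $K(\F_p^n)$ is sent to a simplex of $K(\F_q^r)$. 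Injectivity on vertices gives nondegeneracy.

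For the lower bound, let $r = \varsigma_{p,q}(n)$, so that a nondegenerate map $f \colon K(\F_p^n) \to K(\F_q^r)$ exists. By Proposition~\ref{my1}, $f$ is injective, hence $f_0(K(\F_p^n)) \leq f_0(K(\F_q^r))$, that is,
\[
\frac{p^n-1}{p-1} \leq \frac{q^r-1}{q-1}.
\]
Multiplying by $q-1$ and rearranging yields
\[
\frac{(q-1)(p^n-1)}{p-1} + 1 \leq q^r,
\]
so applying $\log_q$ and using that $r$ is an integer gives
\[
\left\lceil \log_q\!\left(\frac{(q-1)(p^n-1)}{p-1}+1\right)\right\rceil \leq r = \varsigma_{p,q}(n).
\]

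There is no substantial obstacle: the proof reduces to a vertex-counting argument for the lower bound and a canonical simplex-into-basis construction for the upper bound. The only point that deserves care is verifying that the map $v_i \mapsto l(e_i)$ is genuinely simplicial, which follows because every subset of a basis of $\F_q^r$ is unimodular, and that invoking Proposition~\ref{my1} requires both source and target to be complexes of the form $K(\F_s^t)$, which is precisely our setting.
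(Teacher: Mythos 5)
Your proposal is correct and takes essentially the same route as the paper: the lower bound comes from vertex-counting via the injectivity provided by Proposition~\ref{my1} (the paper invokes Corollary~\ref{my2}, which is the same fact packaged slightly differently), and your explicit map $v_i \mapsto l(e_i)$ for the upper bound is precisely the composite of the paper's two inclusions $K(\F_p^n)\hookrightarrow \Delta^{\frac{p^n-1}{p-1}-1}\hookrightarrow K(\F_q^{\frac{p^n-1}{p-1}})$.
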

\begin{proof}
Corollary \ref{my2} implies the following relation \begin{equation}\label{ob1}
\frac{p^n-1}{p-1}\leq \frac{q^{\varsigma_{p,q} (n)}-1}{q-1}.
\end{equation} 
On the other hand, $K(\F_p^n)$ includes into $K({\F_q}^{\frac{p^n-1}{p-1}})$ via the composition of the inclusions 
\begin{equation}\label{ob2}
K(\F_p^n)\xhookrightarrow{} \Delta^{\frac{p^n-1}{p-1}-1}\xhookrightarrow{} K({\F_q}^{\frac{p^n-1}{p-1}}). 
\end{equation}

From \eqref{ob1} and \eqref{ob2}, we deduce the desired inequalities.
\end{proof}

By Theorem~\ref{bounds1}, the function $\varsigma_{p, q}(n)$ is bounded above with $ \frac{p^n-1}{p-1}$ implying that it is well defined.

For any positive integer $n$, there is a composition of nondegenerate maps 
\[
K(\F_p^n)\rightarrow K(\Z^{\vartheta_p (n)})\rightarrow K(\F_q^{\vartheta_{p}(n)})
\]implying the following statement.

\begin{proposition}
\label{estimate} For every positive integer $n$ and any two prime numbers $p$ and $q$, the following inequality holds $$\vartheta_p (n)\geq \varsigma_{p, q} (n).$$\qed
\end{proposition}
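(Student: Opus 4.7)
The plan is to chase the chain of nondegenerate maps that the authors have essentially already assembled in the paragraph preceding the statement. The whole proposition is a minimality argument comparing the two invariants $\vartheta_p(n)$ and $\varsigma_{p,q}(n)$ by exhibiting, from any witness of $\vartheta_p(n)$, a witness (of the same size) for $\varsigma_{p,q}(n)$.

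First I would unpack the definitions. By definition of $\vartheta_p(n)$ there exists a nondegenerate simplicial map
\[
f\colon K(\F_p^n) \lra K(\Z^{\vartheta_p(n)}).
\]
Next, the paper has already observed (in the paragraph before inequality~\eqref{in:pg}) that the coordinatewise mod $q$ reduction $\Z^r \to \F_q^r$ sends lines of $\Z^r$ to lines of $\F_q^r$ and induces a nondegenerate simplicial map
\[
\rho_q\colon K(\Z^r) \lra K(\F_q^r),
\]
for any prime $q$ and any $r$. Applying this with $r=\vartheta_p(n)$, the composition $\rho_q\circ f\colon K(\F_p^n)\lra K(\F_q^{\vartheta_p(n)})$ is a composition of nondegenerate simplicial maps, hence is itself a nondegenerate simplicial map.

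Finally, by the very definition of $\varsigma_{p,q}(n)$ as the \emph{minimal} integer $r$ admitting a nondegenerate map $K(\F_p^n)\to K(\F_q^r)$, the existence of $\rho_q\circ f$ forces
\[
\varsigma_{p,q}(n) \;\leq\; \vartheta_p(n),
\]
which is exactly the claim. The only genuinely nontrivial point to verify along the way is that $\rho_q\circ f$ is nondegenerate on vertices, i.e.\ that two distinct lines in $\Z^{\vartheta_p(n)}$ spanning a unimodular pair in $K(\Z^{\vartheta_p(n)})$ cannot collapse to the same line modulo $q$; but this is immediate from the nondegeneracy of $\rho_q$ on $1$-simplices, which is the content of the reduction map being simplicial and nondegenerate in the sense used in Proposition~\ref{bin}. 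Thus there is no serious obstacle, and the proposition reduces to the functoriality of composing nondegenerate maps together with minimality of $\varsigma_{p,q}(n)$.
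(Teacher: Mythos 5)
Your proposal is correct and is essentially identical to the paper's argument: the paper also composes the nondegenerate map $K(\F_p^n)\to K(\Z^{\vartheta_p(n)})$ (given by the definition of $\vartheta_p$) with the nondegenerate mod~$q$ reduction $K(\Z^{\vartheta_p(n)})\to K(\F_q^{\vartheta_p(n)})$ and invokes minimality of $\varsigma_{p,q}(n)$. No further comment is needed.
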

 Using  Proposition~\ref{estimate}, the lower bound of $\varsigma_{p, q} (n)$ given in Theorem~\ref{bounds1}  is also a lower bound of $\vartheta_p (n)$. The largest lower bound is reached for $q=2$, so 
\[
\vartheta_p (n)\geq \left\lceil \log_2 \left(\frac{p^n-1}{p-1}+1\right) \right\rceil.
\]
In a similar way, using~\eqref{ob2}, we have
\[
\vartheta_p (n)\leq \frac{p^n-1}{p-1}.
\]

\section{Bhargava's generalized factorial function}
A fundamental interpretation of the factorial function is the number of all permutations of a set with $n$ elements. Its countless appearance in numerous combinatorial formulas such as binomial coefficients, Stirling numbers, Catalan numbers, etc. makes the factorial function one of the principal functions in combinatorics. The factorial function has important appearances in number theory and mathematical analysis as well motivating different generalisations to suit variety of number-theoretic, ring-theoretic and combinatorial problems.  

Bhargava~\cite{Bhar} considered one such generalisation of the classical factorial function on $\mathbb{Z}$ that satisfies the corresponding analogues of classical theorems of elementary number theory: the binomial coefficient theorem, the theorem about the greatest common factor of the set of all integer values of a primitive polynomial (that is, a polynomial whose coefficients are relatively prime numbers), the theorem on the number of polynomial functions from $\mathbb{Z}$  to $\mathbb{Z}_n$, and the theorem about the product of pairwise differences of any $n+1$ integers. 

Let $S$ be a subset of $\mathbb{Z}$ and $p$ be a prime number. 

\begin{definition} A $p$-ordering on $S$ is a sequence of elements ${(a_i)}_{i=0}^{\infty}$ formed by induction according to the following rules:

\begin{enumerate}
\item[(i)] Choose an element $a_0\in S$ arbitrary.

\item[(ii)] For any positive integer $n$, the integer $a_n$ is chosen so that it minimises the highest powers of $p$ dividing $(a_n-a_{n-1}) \cdots (a_n-a_1)(a_n-a_0)$. 
\end{enumerate}
\end{definition}

For a $p$-ordering on $S$ and $k$ a positive integer, the number $\nu_k (S, p)$ is defined to be the highest power of $p$ dividing $(a_k-a_{k-1}) \cdots (a_k-a_1)(a_k-a_0)$. There are many distinct $p$-orderings on $S$, but the numbers $\nu_k (S, p)$ are independent of a choice of $p$-ordering. Therefore, $\nu_k (S, p)$ are invariant of the subset $S$, as it was proved by Bhargava in \cite{Bhar}.
Using this observation, Bhargava defined the generalised factorial function for any subset $S$ as 
\[
{k!}_S=\prod_p \nu_k (S, p) 
\]
where the product is taken over all prime numbers.

For more details about the generalised factorial function and its number-theoretical properties see the aforementioned Bhargawa's paper. Motivated by the classical combinatorial description of the standard factorial function, Bhargawa~\cite{Bhar} formulated the following problem.

\begin{problem}[\cite{Bhar}, Question 27] \label{q27} For a subset of $S$, is there a natural combinatorial interpretation of the number ${k!}_S$.
\end{problem}

We solve Problem~\ref{q27} when $S$ is the set of powers of a prime number $p$.
Bhargava showed in~\cite{Bhar} that if $S$ is a geometric progression in $\mathbb Z$ with common ratio $q$ and first term $a$ then
\begin{equation}
\label{fact}
k!_S=a^k(q^k-1)(q^k-q)\cdots (q^k-q^{k-1}).
\end{equation}

\begin{proposition} For the set $S$ of powers of a prime number $p$, the generalised factorial function ${i!}_S$ is the product of $i!$ and $f_{i-1} (X (\mathbb{F}_p^i))$.
\end{proposition}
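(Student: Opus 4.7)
The plan is to reduce the statement to a direct comparison between Bhargava's closed form for $k!_S$ on a geometric progression and the $\mathbf f$-vector formula of Theorem~\ref{month}.

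First I would recognise that $S=\{1,p,p^2,\ldots\}$ is a geometric progression in $\Z$ with first term $a=1$ and common ratio $q=p$, so formula~\eqref{fact} applies and gives
\[
i!_S \;=\; 1^{i}\,(p^{i}-1)(p^{i}-p)\cdots(p^{i}-p^{i-1}).
\]
Next, I would invoke Theorem~\ref{month} with $n=i$ and index shifted from $i$ to $i-1$ to read off
\[
f_{i-1}\!\left(X(\mathbb{F}_p^{i})\right) \;=\; \frac{(p^{i}-p^{i-1})(p^{i}-p^{i-2})\cdots(p^{i}-p^{0})}{i!}.
\]
Multiplying both sides by $i!$ and rearranging the factors in the numerator (they are the same $i$ quantities $p^i-p^j$ for $0\le j\le i-1$, just written in reverse order) produces exactly the right-hand side of the expression for $i!_S$ above.

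Hence $i!_S = i!\cdot f_{i-1}(X(\mathbb F_p^i))$, which is the claim. No substantive obstacle is expected: the entire argument is an identification of two explicit products, with the combinatorial content already packaged into Theorem~\ref{month} and the number-theoretic content packaged into~\eqref{fact}. The only small point worth flagging in the write-up is that $a=1$ makes the prefactor $a^i$ in~\eqref{fact} trivial, so the match is exact rather than up to a power of the first term.
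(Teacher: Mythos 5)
Your proposal is correct and follows essentially the same route as the paper: apply Bhargava's closed form \eqref{fact} for a geometric progression with $a=1$, $q=p$, and match it against the formula for $f_{i-1}(X(\mathbb{F}_p^i))$ from Theorem~\ref{month}. The paper additionally remarks that $1,p,p^2,\ldots$ is an $l$-ordering of $S$ for every prime $l$, but the computational core is identical to yours.
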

\begin{proof}
It is easy to verify that 1, $p, p^2, p^3,\ldots$ forms an $l$-ordering of $S$ for all primes $l$. Using identity~\eqref{fact} and Theorem~\ref{month}, 
\[
{i!}_S=(p^i-p^{i-1}) \cdots (p^i-p^0)= i!f_{i-1}(X(\F_p^i)). \qedhere
\]
\end{proof}

 We remark that in this case there is another interpretation of ${i!}_S$ as the number of $(i-1)$-simplices of a simplicial complex of ordered unimodular sequences in $\mathbb{F}_p^i$,   that is, the number of unimodular sequences in $\mathbb{F}_p^i$ of length $i$. This complex is a special case of a simplicial complex described in~\cite[Definition~2.3]{MR586429}.

Bhargava in \cite{Bhar} established that for $T\subset S \subset \mathbb{Z}$,  generalised factorial function ${k!}_S$ divides ${k!}_T$ for all positive integers $k$.
We give a combinatorial interpretation of $\frac{k!_T}{k!_S}$ when $S=\mathbb{Z}$, $T=\{1, p,\dots, p^k, \dots \}$ and $p$ is a prime number. 

\begin{lemma}
\label{interpretation}
Let $p$ be a prime number, $S=\mathbb{Z}$ and $T=\{1, p,\dots, p^k, \dots \}$.
Then $$\frac{k!_T}{k!_S}=f_{k-1} (X(\mathbb{F}_p^k)).$$
\end{lemma}
\begin{proof}
By Lemma~\ref{month}, 
\[
 f_{k-1} (X(\mathbb{F}_p^k))= \frac{(p^k-p^{k-1}) \cdots (p^k-p^0)}{k!}=\frac{k!_T}{k!_S}.
\]
\end{proof}
\Cref{interpretation} also gives a proof of divisibility of $(p^k-p^{k-1}) \cdots (p^k-p^0)$ by $k!$. 
It would be interesting to find a purely combinatorial proof of Bhargava's result on divisibility in general.

\begin{center}\textmd{\textbf{Acknowledgements} }
\end{center}

\medskip
The first author is grateful to the Vietnam Institute for Advanced Studies in Mathematics in Hanoi for the
hospitality, excellent working environment and generous financial support while accomplishing this research. The first, third and fourth authors were partially supported by the bilateral project ``Discrete Morse theory and its Applications" funded by the Ministry for Education and Science of the Republic of Serbia and the Ministry of Education, Science and Sport of the Republic of Slovenia. 
The third author was supported by the Slovenian Research Agency program P1-0292 and the grants J1-8131, N1-0064,
and N1-0083.

\bibliographystyle{amsplain}
\bibliography{mybibliography}

\end{document}